\documentclass[11pt]{article}

\usepackage[T1]{fontenc}
\usepackage{lmodern}
\usepackage[margin=1.08in]{geometry}
\usepackage{amsmath,amssymb,amsthm,mathtools}
\usepackage{microtype}
\usepackage{xcolor}
\usepackage{enumitem}
\usepackage{tikz}
\usepackage{float}
\usepackage{hyperref}

\hypersetup{
  colorlinks=true,
  linkcolor=blue!55!black,
  citecolor=blue,
  urlcolor=blue!65!black,
  pdftitle={Zonoids whose polars are zonoids: the Banach--Mazur distance need not tend to one},
  pdfauthor={Dmitry Ryabogin and Artem Zvavitch}
}

\mathtoolsset{showonlyrefs}

\newtheorem{theorem}{Theorem}[section]

\newtheorem{lemma}[theorem]{Lemma}
\newtheorem{corollary}[theorem]{Corollary}
\theoremstyle{definition}

\theoremstyle{remark}
\newtheorem{remark}[theorem]{Remark}

\newcommand{\R}{\mathbb R}
\newcommand{\Sph}{\mathbb S}
\newcommand{\dd}{\,\mathrm d}
\newcommand{\E}{\mathbb E}
\newcommand{\abs}[1]{\left|#1\right|}
\newcommand{\norm}[1]{\left\|#1\right\|}
\newcommand{\ip}[2]{\left\langle #1,#2\right\rangle}
\newcommand{\BM}{\mathrm{BM}}
\newcommand{\erf}{\operatorname{erf}}

\pgfmathdeclarefunction{tikzerf}{1}{%
  \pgfmathparse{1/(1+0.3275911*abs(#1))}%
  \edef\tikzerftmp{\pgfmathresult}%
  \pgfmathparse{(#1<0 ? -1 : (#1>0 ? 1 : 0))
    *(1-(((((1.061405429*\tikzerftmp-1.453152027)*\tikzerftmp
    +1.421413741)*\tikzerftmp-0.284496736)*\tikzerftmp
    +0.254829592)*\tikzerftmp)*exp(-(#1)^2))}%
}

\title{Zonoids whose polars are zonoids:\\
the Banach--Mazur distance need not tend to one}
\author{Dmitry Ryabogin and Artem Zvavitch%
\thanks{The authors are supported in part by U.S. National Science Foundation
Grants DMS-2247771 and DMS-2604412.}}
\date{}

\begin{document}

\maketitle

\begin{abstract}
For every $n\geq2$, we consider a  Gaussian
zonoid of revolution $Z_n$ arising from  works of Vitale and Mathis.  We prove that
$Z_n^\circ$ is also a zonoid and compute the Banach--Mazur distance
from $Z_n$ to the Euclidean ball.  This distance is independent of the
dimension and is approximately $1.10$.  Consequently, the supremal
Banach--Mazur distance among zonoids whose polars are zonoids does not
converge to $1$.   The construction also produces a
separable real Banach space $X$, not isometric to a Hilbert space, such
that both $X$ and $X^*$ embed linearly isometrically into $L_1$.
\end{abstract}

\medskip
\noindent\textit{2020 Mathematics Subject Classification.}
52A21 (primary); 46B03, 46B20, 26A48.

\smallskip
\noindent\textit{Keywords.}
Zonoid, polar body, Banach--Mazur distance, Gaussian zonoid,
intersection body, complete monotonicity, Gaussian scale mixture,
Bernstein function, isometric subspace of $L_1$.

\tableofcontents

\section{Introduction}

A \emph{zonotope} is a Minkowski sum of finitely many line segments.
In the origin-symmetric setting considered in this note, it can be
written as
\[
 Z=\sum_{j=1}^N[-v_j,v_j].
\]
An origin-symmetric convex body $Z\subset\R^n$ is called a
\emph{zonoid} if it is a Hausdorff limit of zonotopes.  The class of
zonoids is rich: it contains every origin-symmetric planar convex body
and is closed under linear images and Minkowski addition.  In
dimensions $n\geq3$, however, it is not closed under polarity.  For
example, the cube $[-1,1]^n$ is a zonoid, whereas its polar, the
cross-polytope, is not. 
 In this paper we study zonoids whose polars are also zonoids.

There is a direct functional-analytic interpretation of this
question.  Let $X_Z$ be the normed space whose unit ball is
$Z^\circ$, the polar body of $Z$.  Under the usual identification of $\R^n$ with its dual,
the unit ball of $X_Z^*$ is then $Z$.  The classical correspondence
between zonoids and finite-dimensional subspaces of $L_1$ gives
\[
 Z\text{ and }Z^\circ\text{ are zonoids}
 \quad\Longleftrightarrow\quad
 X_Z\text{ and }X_Z^*\text{ admit linear isometric embeddings
 into }L_1.
\]
Moreover, $Z$ is an ellipsoid if and only if the norm of $X_Z$ is
Hilbertian.  Thus the study of zonoids whose polars are also zonoids
is the geometric form of a classical isometric problem in Banach
space theory.  We recall the details of this correspondence in
Subsection~\ref{subsec:zonoids-L1}.

Bolker asked whether, in dimension at least $3$, the simultaneous
assumptions that $Z$ and $Z^\circ$ are zonoids force $Z$ to be an ellipsoid
\cite[Conjecture~6.8]{Bolker1969}.  Schneider disproved this in every
dimension $n\geq3$ by a spherical-harmonic perturbation of
the Euclidean ball \cite{Schneider1975}.  Schneider's paper does not
provide a dimension-independent lower bound for the distance from the
Euclidean ball.  Lonke later constructed nonsmooth examples in
dimensions $3$ and $4$ and proved that a zonoid whose polar is also a zonoid cannot have a proper
Minkowski summand of dimension between $1$ and $n-2$
\cite{Lonke1997}.  In particular, the known low-dimensional examples
cannot be lifted by the most direct Minkowski-sum construction.

The asymptotic question remained: must all such examples become
arbitrarily close to the Euclidean ball as the dimension tends to
infinity?  This question appears as Problem~1.02, attributed there to
Rolf Schneider, on the AIM list arising from the 2007 workshop on
Fourier analytic methods in convex geometry \cite{AIM2007}.  According
to the authors' recollection, Joram Lindenstrauss also repeatedly
promoted the broader problem of zonoids whose polars are also zonoids and emphasized its isometric
functional-analytic formulation.  His involvement is also reflected
in Lonke's acknowledgment of discussions with Lindenstrauss
\cite[p.~12]{Lonke1997} and in Pisier's account
\cite[discussion following Corollary~2.7]{Pisier2012}.

For $n\geq1$, put
\begin{equation}\label{eq:Dn-definition}
 D_n:=\sup\left\{
 d_{\BM}(Z,B_2^n):
 Z\subset\R^n,\ Z\text{ and }Z^\circ\text{ are zonoids}
 \right\},
\end{equation}
where $d_{\BM}(Z,B_2^n)$ denotes the Banach--Mazur distance between $Z$ and the Euclidean unit ball $B_2^n\subset\R^n$.
  The question is whether $D_n\to1$.  Notice that Grothendieck's theorem
already gives the dimension-free upper bound
\begin{equation}\label{eq:Grothendieck-upper}
 D_n\leq K_G^{\R}<1.79,
\end{equation}
where $K_G^{\R}$ denotes the real Grothendieck constant. The above estimate follows from Krivine
\cite{Krivine1979}; see Pisier's quantitative formulation
\cite[Corollary~2.7]{Pisier2012}.  Strict improvements of Krivine's
bound are known; see \cite{BravermanMakarychevMakarychevNaor2011} and
the recent quantitative improvement \cite{SahaEtAl2026}.  These
improvements do not change the two-decimal estimate used here.
Thus the issue is asymptotic
roundness, rather than boundedness.

In this paper we prove that $D_n$ is bounded away from $1$ uniformly
in $n\ge 2$, and hence that $D_n\not\to1$.  The examples establishing this
result arise from Gaussian construction.  Let
\[
 \Gamma_{n-1}=(\gamma_1,\ldots,\gamma_{n-1}),
\]
where $\gamma_1,\ldots,\gamma_{n-1}$ are independent centered Gaussian
random variables with variance $\pi/2$.  Equivalently,
$\Gamma_{n-1}$ is a centered Gaussian vector in $\R^{n-1}$ with
covariance $(\pi/2)I_{n-1}$, where $I_{n-1}$ denotes the
$(n-1)\times(n-1)$ identity matrix.  This normalization is chosen so
that
$
 \E|\gamma_j|=1$. 
 
 For a convex body $K\subset\R^n$, let
$
 h_K(u):=\max_{y\in K}\ip{u}{y}
$
denote its support function.  Define
$Z_n\subset\R^{n-1}\times\R$ by prescribing its support function:
\begin{equation}\label{eq:Zn-intro}
 h_{Z_n}(x,t)
 =\E\abs{\ip{x}{\Gamma_{n-1}}+t}.
\end{equation}
Indeed, the right-hand side is an average of support functions of
origin-symmetric line segments and is therefore the support function
of a zonoid.  The chosen Gaussian normalization ensures that
$
 h_{Z_n}(x,0)=\abs{x},
$
while
$
 h_{Z_n}(0,t)=\abs{t}
$
follows immediately from the definition.
The body $Z_n$ is invariant under $O(n-1)$ acting on the
first $n-1$ coordinates; equivalently, it is a body of revolution
about the last coordinate axis.  For any body $K\subset\R^{n-1}\times\R$
with this invariance, we call the one-variable function
\[
 \varphi_K(s):=h_K(e_1,s),\qquad s\in\R,
\]
where $e_1\in\R^{n-1}$ is the first coordinate vector, its \emph{support
profile}.  This is simply convenient terminology for
the restriction of the support function to a two-dimensional meridian:
by rotational invariance and homogeneity,
\[
 h_K(x,t)=\abs{x}\,\varphi_K(t/\abs{x}),\qquad x\neq0.
\]
For $K=Z_n$, this reconstruction from the support profile gives exactly
the support function of the  Gaussian zonoid of Mathis and Vitale; see \cite[Theorem~3.1]{Vitale1991} and \cite[equation~(2.12) and the proof of
Theorem~2.10]{Mathis2025}. 

The zonoid $Z_n$ was introduced by Mathis, who also obtained a uniform upper bound on $d_{\BM}(Z_n,B_2^n)$. We prove that its polar $Z_n^\circ$ is also a zonoid and
determine the exact Banach--Mazur distance in
Theorem~\ref{thm:exact-BM}. In particular, we obtain the
following result.

\begin{theorem}\label{thm:main-finite}
For every $n\geq2$, the body $Z_n$ defined by
\eqref{eq:Zn-intro} and its polar $Z_n^\circ$ are zonoids.  Moreover,
\begin{equation}\label{eq:main-distance}
 d_{\BM}(Z_n,B_2^n)
 \ge 1.09.
\end{equation}
\end{theorem}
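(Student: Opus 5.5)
The plan has three parts: compute the support profile of $Z_n$ explicitly, prove that the gauge of $Z_n$ is an $L_1$-norm, and reduce the Banach--Mazur estimate to a one-variable problem on the profile. The zonoid property of $Z_n$ itself holds by construction, since \eqref{eq:Zn-intro} is an average of support functions of segments.

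\textbf{Explicit profile and meridian curve.} For $\gamma\sim N(0,\pi/2)$ the standard Gaussian formula gives
\[
 \varphi(s):=\varphi_{Z_n}(s)=\E\abs{\gamma+s}=s\,\erf\!\bigl(s/\sqrt\pi\bigr)+e^{-s^2/\pi},
\]
which is independent of $n$. Since $h_{Z_n}$ is smooth and strictly convex off the axis, the boundary point of $Z_n$ with outer normal $(e_1,s)$ is $\nabla h_{Z_n}(e_1,s)$. This gives the meridian parametrization
\[
 s\mapsto\bigl(e^{-s^2/\pi},\ \erf(s/\sqrt\pi)\bigr),
\]
so the norm $\norm{(x,t)}_{Z_n}=\abs{x}\,\psi(t/\abs{x})$ has an explicit profile $\psi$ given by inverting this curve.

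\textbf{Polar is a zonoid (main obstacle).} Since $h_{Z_n^\circ}=\norm{\cdot}_{Z_n}$, it suffices to represent the norm as an average of absolute values of linear functionals. I will look for a representation of the form
\[
 \norm{(x,t)}_{Z_n}=c\,\E\abs{\ip{x}{G}+tW},
\]
where $G$ is a standard Gaussian vector in $\R^{n-1}$ and $W=\sqrt S\,g$ is a Gaussian scale mixture independent of $G$. Conditioning on $S$ turns the right-hand side into $c'\,\E\sqrt{\abs{x}^2+t^2S}$. It is therefore enough to show that
\[
 F(u):=\psi(\sqrt u)=\E\sqrt{1+uS}
\]
for some nonnegative random variable $S$, possibly together with a linear term $b\sqrt u$, which corresponds to adding the segment along the axis. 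Such functions are exactly the Bernstein functions of a specific Stieltjes-type class. My first attempt is therefore to verify that $F'$ is completely monotone, and in fact of the form $\E\bigl[S/(2\sqrt{1+uS})\bigr]$, using the explicit parametrization above. This amounts to computing $\psi'$ and $\psi''$ along the curve in terms of $s$, where $u=(\text{slope})^2$, and reading off the mixing measure. This is the step I expect to be hardest. Complete monotonicity of a function that is defined only implicitly is delicate, and it may need a Laplace-transform identity for the Gaussian error function rather than brute differentiation. As a fallback I would check the positive-definiteness (Fourier) criterion for $L_1$-embeddability on the two-variable restriction.

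\textbf{Banach--Mazur lower bound.} Suppose $E\subset Z_n\subset dE$, and write $h_E(u)^2=\ip{Au}{u}$. The condition becomes
\[
 h_{Z_n}(u)^2\le\ip{Au}{u}\le d^2h_{Z_n}(u)^2\quad\text{for all }u,
\]
which is convex in $A$. Averaging $A$ over $O(n-1)\times\{\pm1\}$ therefore produces an admissible $A=\mathrm{diag}(\alpha I_{n-1},\beta)$. Restricting to $u=(e_1,s)$, the problem becomes: find $\alpha,\beta$ such that
\[
 \varphi(s)^2\le\alpha+\beta s^2\le d^2\varphi(s)^2\quad\text{for all }s\in\R .
\]
Taking $s=0$ forces $1\le\alpha\le d^2$, and $s\to\infty$ (where $\varphi(s)\sim\abs s$) forces $1\le\beta\le d^2$. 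Evaluating at one or two intermediate values of $s$ where $\varphi(s)^2$ deviates most from $1+s^2$ then gives a two-variable linear feasibility problem in $(\alpha,\beta)$. Its infeasibility for $d<1.09$ follows from explicit numerical bounds on $\erf$ and $\exp$ at those points, which yields \eqref{eq:main-distance}.
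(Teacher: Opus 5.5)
\textbf{The polar step is a genuine gap.} Your treatment of $Z_n$ itself is fine. The Banach--Mazur part is also fine, and slightly more economical than the paper's exact computation. After averaging, $s=0$ and $s\to\infty$ force $\alpha,\beta\ge1$, so the single point $s=7/5$ gives $d\ge\sqrt{1+s^2}/\varphi(s)=1/F_1(7/5)$. What remains there is the routine estimate $F_1(7/5)<100/109$, which the paper proves with the alternating series for $\varphi$.

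The gap is that you never prove $Z_n^\circ$ is a zonoid, and the target you set is stronger than what is needed. Write $\psi=\varphi^\circ$. Your ansatz is $\psi(s)=\int\sqrt{1+s^2\sigma}\,d\mu(\sigma)$, a Minkowski integral of coaxial ellipsoids; the $b\abs{s}$ term is excluded by smoothness at $0$. This forces $\psi''(\sqrt u)=\int(u+\tau)^{-3/2}\,d\kappa(\tau)$ with $\kappa\ge0$, i.e.\ a generalized Stieltjes function of order $3/2$. In the paper's notation, the mixing measure $\eta$ would need a completely monotone density, not merely be positive. You give no way to establish this, and the paper neither needs nor proves it.

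Your proposed first check, complete monotonicity of $F'$, is far too weak. It does not even give convexity of $\psi$: for $F'(u)=e^{-u}$ one gets $\psi''(\sqrt u)=2(1-2u)e^{-u}$. ``Reading off the mixing measure'' from the parametrization supplies no positivity. The fallback is empty as stated. Every planar symmetric convex body is a zonoid, so a Fourier test on a two-dimensional restriction carries no information, and the genuine test in $\R^n$ depends on $n$.

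\textbf{What is missing.} You need a weaker target and a mechanism for proving complete monotonicity (CM) of an implicitly defined function. The paper supplies both.

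\emph{The target.} Lemma~\ref{lem:Gaussian-mixture} shows the following. If $\psi$ is even, convex and $C^2$, $u\mapsto\psi''(\sqrt u)$ is CM, and $c=\lim_{s\to\infty}(\psi-s\psi')\ge0$, then $\psi(s)=c+\int\E|\xi_\lambda+s|\,d\eta(\lambda)$ with $\eta\ge0$. This is a mixture of Gaussian zonoids of the same type as $Z_n$, valid in every dimension; here $c=0$.

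\emph{The mechanism.} Put $G(a)=\sqrt\pi e^{a^2}\erf(a)$. The identity $G'=2(1+aG)$ shows that $P(u)=1/(1+\sqrt u\,G^{-1}(\sqrt u))$ satisfies the fixed-point equation $P=1/(1+\mathcal LP)$, where $\mathcal Lf(u)=\tfrac u2\int_0^1 f(ut^2)\,dt$.
\begin{itemize}
\item Since $\mathcal L$ maps CM functions to Bernstein functions, $f\mapsto 1/(1+\mathcal Lf)$ preserves complete monotonicity.
\item The iterates from $P_0=1$ converge to $P$, with error at most $k!\,u^k/(2k)!$. Hence $P$ is CM.
\item Then $(\varphi^\circ)''(\sqrt u)=\pi Q(\pi u)$ with $Q(u)=\tfrac12 e^{(G^{-1}(\sqrt u))^2}P(u)^3$.
\item $B=-\log(2Q)$ has $B'=\tfrac{1-P}{u}+\tfrac34P^2$, which is CM. So $B$ is Bernstein and $Q=\tfrac12 e^{-B}$ is CM.
\end{itemize}
Without an argument of this kind, your proposal does not establish the first assertion of the theorem.
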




The same dimension-free profile has an infinite-dimensional consequence.

\begin{theorem}\label{thm:main-infinite-intro}
There is a separable infinite-dimensional real Banach space $X$,
linearly isomorphic but not linearly isometric to a Hilbert space,
such that both $X$ and $X^*$ embed linearly isometrically into
$L_1([0,1])$.
\end{theorem}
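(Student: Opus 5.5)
The statement to prove is Theorem~\ref{thm:main-infinite-intro}. The plan is to pass from the finite-dimensional bodies $Z_n$ to an infinite-dimensional space by taking $n\to\infty$ with a dimension-free profile. First I define the limiting norm. For $(x,t)\in\ell_2\times\R$, with $x=(x_1,x_2,\dots)$ square-summable, put
\[
 N(x,t):=\E\abs{\textstyle\sum_j x_j\gamma_j+t}=\abs{x}\,\varphi(t/\abs{x}),
\]
where $\gamma_j$ are i.i.d.\ centered Gaussians of variance $\pi/2$. The series converges in $L_2$, hence in $L_1$. Here $\varphi$ is the common support profile $\varphi(s)=\E\abs{\gamma_1+s}$, which does not depend on $n$.

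Let $X^*$ be the completion of $\ell_2\oplus\R$ under $N$. The map $(x,t)\mapsto \sum_j x_j\gamma_j+t$ is by definition a linear isometry into $L_1(\Omega)$, and $\Omega$ may be taken standard, so $L_1(\Omega)\cong L_1[0,1]$. Using the bounds $\max(\abs{x},\abs{t})\le N(x,t)\le \abs{x}+\abs{t}$, the norm $N$ is equivalent to the Hilbert norm. Hence $X^*$ is a separable space isomorphic to $\ell_2$, and its dual $X$ is isomorphic to $\ell_2$ as well.

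Next I must show that $X$ embeds isometrically into $L_1$. The restriction of $N$ to $\R^{n-1}\times\R$ is exactly $h_{Z_n}$. The dual norm on the corresponding quotient of $X$ has unit ball $Z_n$ as seen from the norm side, i.e.\ the norm $h_{Z_n^\circ}$-type gauge of $Z_n^\circ$. More precisely, the finite-dimensional subspaces $E_n=\mathrm{span}(e_1,\dots,e_{n-1},e_\infty)$ of $X$, obtained as duals of the quotients, carry norm $\|\cdot\|_{Z_n^\circ}^*=h_{Z_n^\circ}$, i.e.\ the norm whose dual ball is $Z_n$. By rotation invariance of the Gaussian, the projection onto the first $n-1$ coordinates is norm one in $X^*$, because conditional expectation is an $L_1$-contraction. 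So $E_n$ sits isometrically in $X$ as the dual of the complemented subspace, and the union of the $E_n$ is dense in $X$.

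By Theorem~\ref{thm:main-finite}, $Z_n^\circ$ is a zonoid, so each $E_n$ embeds isometrically into $L_1$. Since being a subspace of $L_1$ is a local property that passes to closures of increasing unions (via ultraproducts of $L_1$ spaces being $L_1(\mu)$-spaces, or equivalently through the positive-definiteness characterization of $e^{-\|\cdot\|}$ in the $L_1$ case, which is stable under pointwise limits), $X$ embeds isometrically into some $L_1(\mu)$. Separability then lets me replace $L_1(\mu)$ by $L_1[0,1]$. This step, together with the verification that the finite sections $E_n$ are genuinely isometric to the duals of the sections of $X^*$, is where I expect the main care to be needed. The contractive-projection argument is what makes the duality of sections commute with taking subspaces.

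Finally, $X$ is not isometric to a Hilbert space. If it were, then $X^*$ would be Hilbertian too, and every finite-dimensional subspace of $X^*$ would be Euclidean. But the three-dimensional section $\mathrm{span}(e_1,e_2,e_\infty)$ has unit ball $Z_3^\circ$, whose Banach--Mazur distance to $B_2^3$ is at least $1.09>1$ by Theorem~\ref{thm:main-finite}. A simpler witness also suffices: the two-dimensional section through $e_1,e_\infty$ has norm $\abs{x}\varphi(t/\abs{x})$, and $\varphi$ is not of the form $\sqrt{1+s^2}$, since for large $s$ we have $\varphi(s)-\abs{s}\to0$ exponentially while $\sqrt{1+s^2}-\abs{s}\sim 1/(2\abs{s})$.
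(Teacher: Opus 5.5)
Your argument is correct, but it obtains the $L_1$-embedding of the dual by a different route from the paper. Write $Y$ for your completion of $\ell_2\oplus\R$ under $N$ and $Y_n$ for its section $\R^{n-1}\times\R$.

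The paper works directly on $H\oplus\R$. It computes the dual norm as $\norm{y}_H\varphi^\circ(s/\norm{y}_H)$. The one-variable mixture identity for $\varphi^\circ$ from Lemma~\ref{lem:Gaussian-mixture} uses a measure $\eta$ that does not depend on $n$. This lets the paper write down an explicit isometry $\widetilde J(y,s)=W_y/\sqrt{\pi\lambda}+s$ into $L_1(\eta\otimes\mathbb P)$. The normalizations $\eta((0,\infty))=1$ and $\int(\pi\lambda)^{-1/2}\,\dd\eta(\lambda)=1$ take care of integrability and the axial direction.

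You use only the qualitative fact that each $Z_n^\circ$ is a zonoid. You transport it into $Y^*$ through the coordinate projections $P_n$. These have norm one because $J(P_ny)=\E[J(y)\mid\gamma_1,\dots,\gamma_{n-1}]$, so $P_n^*$ embeds $Y_n^*=(\R^n,h_{Z_n^\circ})$ isometrically. You then invoke the local-to-global principle for subspaces of $L_1$.

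Your route is more robust: it works for any coherent family of finite-dimensional examples with contractive projections, and needs no single dimension-free mixing measure. The cost is that the embedding is not explicit and you need extra standard machinery. That machinery is either ultraproducts plus Kakutani's representation, or the Bretagnolle--Dacunha-Castelle--Krivine positive-definiteness criterion. You also need to reduce a separable subspace of $L_1(\mu)$ to $L_1[0,1]$, via a countably generated sub-$\sigma$-algebra.

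You should state three things you currently leave implicit:
\begin{enumerate}[label=(\roman*)]
\item The union $\bigcup_nP_n^*(Y_n^*)$ is dense in $Y^*$. This is immediate, since $\norm{f-f\circ P_n}_{Y^*}\to0$ in the Hilbert identification.
\item The dual of $Y^*$ is $Y$ canonically, by reflexivity. This is needed so that your "$X^*$" really is the dual of $X$.
\item In the two-dimensional witness, evenness of $\varphi$ together with $\varphi(0)=1$ and $\norm{(0,1)}=1$ forces any candidate inner product to give $\varphi(s)^2=1+s^2$. This is exactly the paper's parallelogram argument.
\end{enumerate}
Your three-dimensional witness via $d_{\BM}(Z_3^\circ,B_2^3)\geq1.09$ is already complete.

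There is also a small slip: the norm $h_{Z_n^\circ}$ has unit ball $Z_n$ and dual ball $Z_n^\circ$, not dual ball $Z_n$.
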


Grothendieck asked whether the two isometric-embedding assumptions in
Theorem~\ref{thm:main-infinite-intro} force the norm to be Hilbertian.
Pisier records that the infinite-dimensional real and complex cases
were open \cite[p.~249, immediately after
Corollary~2.7]{Pisier2012}; see also \cite[p.~1]{Lonke1997}.
Thus our Theorem~\ref{thm:main-infinite-intro} answers the real isometric
problem negatively.  In response to prompts from the authors based on ideas developed in
the present real argument, OpenAI's ChatGPT subsequently produced a
complete solution to the finite-dimensional complex problem.  We will record this result in a companion
arXiv note \cite{RyaboginZvavitchComplex}, which is not intended for
separate journal publication, see Subsection \ref{complex} for more details.

The paper is organized as follows.
Section~\ref{sec:preliminaries} recalls the necessary background.
Section~\ref{sec:gaussian-zonoid} studies $Z_n$ and its polar and
describes their meridian geometry.
Section~\ref{sec:polar-zonoid} proves that $Z_n^\circ$ is a zonoid.
Section~\ref{sec:exact-BM} determines the exact Banach--Mazur distance.
 Section~\ref{sec:infinite-dimensional} gives the
infinite-dimensional application. We conclude in  Section~\ref{sec:rem} with remarks on intersection bodies and the complex
analogue.

\section{Preliminaries}\label{sec:preliminaries}

\subsection{Support functions, polarity, and zonoids}\label{subsec:zonoids-L1}

All convex bodies are compact, convex, origin-symmetric, and have
nonempty interior, unless stated otherwise.  For a convex body
$K\subset\R^n$, containing the origin,  its support function, Minkowski functional, and polar
are
\[
 h_K(u)=\max_{x\in K}\ip{u}{x},
 \qquad
 \norm{u}_K=\inf\{a>0:u\in aK\},
 \qquad
 K^\circ=\{u:h_K(u)\leq1\}.
\]
The elementary dual identities
\begin{equation}\label{eq:polar-identities}
 h_{K^\circ}(u)=\norm{u}_K,
 \qquad
 \norm{u}_{K^\circ}=h_K(u)
\end{equation}
will be used repeatedly.

For origin-symmetric bodies $K,L\subset\R^n$, their
Banach--Mazur distance is
\begin{equation}\label{eq:BM-definition}
 d_{\BM}(K,L)=\inf\left\{a\geq1:
 L\subset TK\subset aL\text{ for some }T\in GL(n)\right\}.
\end{equation}
In particular,
\begin{equation}\label{eq:BM-ellipsoids}
 d_{\BM}(K,B_2^n)
 =\inf_E\inf\{a\geq1:E\subset K\subset aE\},
\end{equation}
where $E$ runs over origin-centered ellipsoids.  Polarity preserves
the distance:
\begin{equation}\label{eq:BM-polar}
 d_{\BM}(K,L)=d_{\BM}(K^\circ,L^\circ).
\end{equation}
For standard background on the Banach--Mazur distance and its role in
finite-dimensional convexity, see
\cite[Definition~2.1.2, p.~49]{ArtsteinAvidanGiannopoulosMilman2015}
and \cite{TomczakJaegermann1989}.

A centrally symmetric \emph{zonotope} in $\R^n$ is a compact convex
set of the form
\begin{equation}\label{eq:zonotope-definition}
 P_N=\sum_{j=1}^N[-v_j,v_j],
 \qquad v_j\in\R^n.
\end{equation}
Its support function is
$h_{P_N}(u)=\sum_{j=1}^N\abs{\ip{u}{v_j}}$.  A centrally symmetric
zonotope is a convex body precisely when the vectors
$v_1,\ldots,v_N$ span $\R^n$.  A centrally symmetric
body is a \emph{zonoid} if it is a Hausdorff limit of such zonotopes. 
Equivalently, $Z\subset\R^n$ is a zonoid if and only if there is a
finite even positive Borel measure $\mu$ on $\Sph^{n-1}$ such that
\begin{equation}\label{eq:cosine-transform}
 h_Z(u)=\int_{\Sph^{n-1}}\abs{\ip{u}{v}}\,\dd\mu(v).
\end{equation}
See \cite[Section~3.5]{Schneider2014} and
\cite{Bolker1969,GoodeyWeil1993}.  A convenient probabilistic form is
the following: if $V$ is an integrable random vector in $\R^n$, then
\begin{equation}\label{eq:random-zonoid}
 h(u)=\E\abs{\ip{u}{V}}
\end{equation}
is the support function of a possibly lower-dimensional zonoid.  It
has nonempty interior precisely when $V$ is not almost surely
concentrated in a proper linear subspace.   For every
realization $v$ of $V$,
$u\mapsto\abs{\ip{u}{v}}=h_{[-v,v]}(u)$ is the support function of the
segment $[-v,v]$.  Consequently, \eqref{eq:random-zonoid} is the
support function of the Minkowski expectation $\E[-V,V]$, which is a
zonoid.

The following classical correspondence is the
functional-analytic form of the cosine-transform representation. Let $X$ be an $n$-dimensional real normed space.  Let
\[
 B_X:=\{x\in X:\norm{x}_X\leq1\}
\]
denote its closed unit ball, and let $X^*$ denote its dual space,
consisting of all continuous linear functionals on $X$, equipped with
the dual norm
\[
 \norm{f}_{X^*}
 :=\sup_{\norm{x}_X\leq1}\abs{f(x)}.
\]
After choosing a linear identification of $X$ with $\R^n$, we identify
$X^*$ with $\R^n$ through the standard pairing.  Under these
identifications,
$
 B_{X^*}=B_X^\circ.
$

An 
$n$-dimensional real normed space $X$, after a linear identification
with $\R^n$, embeds linearly isometrically into an $L_1$-space if and
only if
\[
 \norm{x}_X
 =\int_{\Sph^{n-1}}\abs{\ip{x}{u}}\,\dd\mu(u)
\]
for a finite even positive measure $\mu$, or equivalently if and only
if $B_{X^*}=B_X^\circ$ is a zonoid.  This is the case $p=1$ of the
classical L\'evy representation; see
\cite[p.~189]{BenyaminiLindenstrauss2000},
\cite[Section~6.1]{Koldobsky2005}, and, for the convex-geometric
formulation, \cite[Section~3.5]{Schneider2014}.

For a centrally symmetric body $Z\subset\R^n$, let $X_Z$ be the
normed space defined by
\[
 \norm{x}_{X_Z}=h_Z(x).
\]
Then $B_{X_Z}=Z^\circ$ and $B_{X_Z^*}=Z$, and therefore
\[
 \begin{split}
 Z\text{ is a zonoid}
 &\Longleftrightarrow X_Z\text{ embeds linearly isometrically into }L_1,\\
 Z^\circ\text{ is a zonoid}
 &\Longleftrightarrow X_Z^*\text{ embeds linearly isometrically into }L_1.
 \end{split}
\]
Consequently, the condition studied here is exactly that a
finite-dimensional normed space and its dual both embed isometrically
into $L_1$.  Taking adjoints and applying the Hahn--Banach theorem also
shows that an isometric embedding $X_Z^*\to L_1$ makes $X_Z$ a metric
quotient of an $L_\infty$-space; see
\cite[Theorem~3.3 and Chapter~4]{Rudin1991}.  Thus $X_Z$ is simultaneously an
isometric subspace of $L_1$ and a metric quotient of $L_\infty$; this
is the formulation used by Lonke \cite[p.~1]{Lonke1997}.

The quantitative finite-dimensional form of Grothendieck's theorem
then gives
\[
 d_{\BM}(Z,B_2^n)=d_{\BM}(X_Z,\ell_2^n)
 \leq K_G^{\R}<1.79.
\]
This formulation follows from work of
Lindenstrauss--Pe{\l}czy\'nski; see
\cite{Grothendieck1953,LindenstraussPelczynski1968} and
\cite[Corollary~2.7]{Pisier2012}.  This is an isomorphic conclusion:
it does not assert that the given norm of $X_Z$ is induced by an inner
product.  The corresponding isometric question is precisely whether
$Z$ must be an ellipsoid.

\subsection{Complete monotonicity and Bernstein functions}\label{compmonotone}
The following analytic notions enter through the proof that
$Z_n^\circ$ is a zonoid.  After using rotational symmetry to reduce
its support function to a one-variable function, we represent that
function as an integral, with respect to a positive measure, of
support functions of elementary zonoids.  The
Hausdorff--Bernstein--Widder theorem provides the required positive
measure once an associated function is shown to be completely
monotone.  Thus complete monotonicity converts our analytic
computations into the desired zonoid representation.

A $C^\infty$ function $f:(0,\infty)\to\R$ is \emph{completely monotone} if
\[
 (-1)^k f^{(k)}(u)\geq0
 \qquad \mbox{ for } k=0,1,2,\ldots \mbox{ and } u>0.
\]
If a completely monotone function $f$ has a finite limit at the
origin, we extend it continuously by setting
$
 f(0):=f(0+).
$
With this convention, we may also say that $f$ is completely monotone
on $[0,\infty)$.

The Hausdorff--Bernstein--Widder theorem says that $f$ is completely
monotone if and only if
\begin{equation}\label{eq:HBW}
 f(u)=\int_{[0,\infty)}e^{-u\lambda}\,\dd\nu(\lambda)
\end{equation}
for a positive measure $\nu$; if $f(0+)<\infty$, then
$\nu$ is finite.  A $C^\infty$ function
$B:(0,\infty)\to[0,\infty)$ is a \emph{Bernstein function}
if $B'$ is completely monotone.  The following standard facts will be
used:
\begin{enumerate}[label=(\roman*)]
\item products and pointwise limits of completely monotone functions
      are completely monotone whenever the limit is finite;
\item if $F$ is completely monotone and $B$ is a Bernstein function,
      then $F\circ B$ is completely monotone;
\item $B$ is a Bernstein function if and only if $e^{-tB}$ is
      completely monotone for every $t>0$.
\end{enumerate}
See \cite[Theorem~1.4, Corollary~1.6, and
Theorem~3.7]{SchillingSongVondracek2012}, as well as
\cite[Chapter~IV]{Widder1941}.

\section{The  Gaussian zonoid \texorpdfstring{$Z_n$}{Z\_n} and its polar}
\label{sec:gaussian-zonoid}
\subsection{The support profile of the Gaussian zonoid \texorpdfstring{$Z_n$}{Z\_n}}
Fix $n\geq2$ and identify
$\R^n=\R^{n-1}\times\R$.  Let $\Gamma_{n-1}$ have density
\begin{equation}\label{eq:Gamma-density}
 z\longmapsto \pi^{-(n-1)}
 \exp\!\left(-\frac{\abs{z}^2}{\pi}\right),
 \qquad z\in\R^{n-1}.
\end{equation}
Thus $\Gamma_{n-1}\sim N(0,(\pi/2)I_{n-1})$.  Define $Z_n$ by
\eqref{eq:Zn-intro}.  Formula \eqref{eq:random-zonoid}, applied to the
random vector $(\Gamma_{n-1},1)$, proves immediately that $Z_n$ is a
zonoid. It is full-dimensional. Indeed, otherwise there would exist a nonzero
$(x,t)\in\R^{n-1}\times\R$ such that
$
h_{Z_n}(x,t)=0.
$
By the definition of $Z_n$, this implies
$
\ip{x}{\Gamma_{n-1}}+t=0$ 
almost surely.
Taking variances and using the nondegeneracy of the Gaussian vector
$\Gamma_{n-1}$ gives
\[
0=\operatorname{Var}\bigl(\ip{x}{\Gamma_{n-1}}+t\bigr)
  =\operatorname{Var}\bigl(\ip{x}{\Gamma_{n-1}}\bigr)
  =\frac{\pi}{2}|x|^2,
\]
and hence $x=0$. Taking expectations then gives $t=0$, contradicting
the choice of $(x,t)$. Therefore $Z_n$ is full-dimensional.

We note that there is a useful geometric way to read this probabilistic
representation.  Let $\gamma_{n-1}$ be the distribution of
$\Gamma_{n-1}$ and, for $v\in\R^{n-1}$, let
$S_v=[-(v,1),(v,1)]$. 
 Since $h_{S_v}(x,t)=\abs{\ip{x}{v}+t}$, the identity
$h_{Z_n}(x,t)=\E\abs{\ip{x}{\Gamma_{n-1}}+t}$ says that
\begin{equation}\label{randomzon}
 Z_n=\int_{\R^{n-1}}S_v\,\dd\gamma_{n-1}(v),
\end{equation}
where the integral is understood in the sense of Minkowski integration:
the support function of the integral is the integral of the support
functions.  Thus one may picture a Gaussian cloud in the affine
hyperplane $x_n=1$, join each point $(v,1)$ to its antipode, and
average the resulting line segments.  The rotational invariance of
the Gaussian cloud makes $Z_n$ a body of revolution.   The
deterministic last coordinate distinguishes the axial direction from
the horizontal directions.  

We now compute the resulting support
function explicitly. We use the standard error function
\begin{equation}\label{eq:erf-definition}
 \erf(a):=\frac2{\sqrt\pi}\int_0^a e^{-r^2}\,\dd r,
 \qquad a\in\R.
\end{equation}
Let $\gamma\sim N(0,\pi/2)$, with density
$\pi^{-1}e^{-u^2/\pi}$.  Since $\Gamma_{n-1}$ is rotationally
invariant, for $x\in\R^{n-1}$, the random variables
$\ip{x}{\Gamma_{n-1}}$ and $|x|\gamma$ have the same distribution.
Consequently, if $x\not =0$, then
\[
 h_{Z_n}(x,t)
 =\E\abs{|x|\gamma+t}
 =|x|\E\abs{\gamma+\frac{t}{|x|}}.
\]
To calculate the last expectation, let $s\in\R$.  Splitting its
defining integral at $-s$ gives
\begin{equation}\label{eq:folded-normal}
 \E\abs{\gamma+s}
 =e^{-s^2/\pi}+s\erf\!\left(\frac{s}{\sqrt\pi}\right).
\end{equation}
This is the folded-normal first-moment formula; see also
\cite{LeoneNelsonNottingham1961}.  We therefore define the support
profile of $Z_n$ by
\begin{equation}\label{eq:phi-definition}
 \varphi(s)
 :=h_{Z_n}(e_1,s)
 =\E\abs{\gamma+s}
 =e^{-s^2/\pi}+s\erf\!\left(\frac{s}{\sqrt\pi}\right),
 \qquad s\in\R,
\end{equation}
where $e_1$ is the first coordinate vector in $\R^{n-1}$; the
function $\varphi$ is even.  Combining the preceding formulas yields,
for $r=\abs{x}>0$,
\begin{equation}\label{eq:Zn-profile}
 h_{Z_n}(x,t)=r\varphi(t/r)
 =r e^{-t^2/(\pi r^2)}
 +t\erf\!\left(\frac{t}{\sqrt\pi r}\right).
\end{equation}
At $r=0$, the continuous extension is $\abs t$.  This is exactly
Mathis's function 
\cite[equation~(2.12)]{Mathis2025}, after interchanging the axial and
horizontal variables.

Two elementary identities will be useful:
\begin{equation}\label{eq:phi-identities}
 \varphi'(s)=\erf\!\left(\frac{s}{\sqrt\pi}\right),
 \qquad
 \varphi(s)-s\varphi'(s)=e^{-s^2/\pi}.
\end{equation}
\subsection{The support profile of \texorpdfstring{$Z_n^\circ$}{Z\_n polar}}
We now derive the support profile of the polar directly from the
definition of the polar body.  Recall from Section~2.1 that
\[
 Z_n^\circ
 =\bigl\{(y,\tau)\in\R^{n-1}\times\R:
          h_{Z_n}(y,\tau)\leq1\bigr\}.
\]
Therefore, by the definition of the support function and its positive
homogeneity,
$$
 h_{Z_n^\circ}(e_1,s)=\sup_{h_{Z_n}(y,\tau)\leq1}
       \bigl(\ip{e_1}{y}+s\tau\bigr)=\sup_{(y,\tau)\neq(0,0)}
       \frac{\ip{e_1}{y}+s\tau}{h_{Z_n}(y,\tau)}.
$$
For $y\neq0$, write $y=r\theta$, where $r=\abs{y}>0$ and
$\theta\in\Sph^{n-2}$, and put $q=\tau/r$.  Formula
\eqref{eq:Zn-profile} then gives
\[
 \frac{\ip{e_1}{y}+s\tau}{h_{Z_n}(y,\tau)}
 =\frac{\ip{e_1}{\theta}+sq}{\varphi(q)}.
\]
For each fixed $q$, the numerator is maximized over
$\theta\in\Sph^{n-2}$ when $\theta=e_1$.  Thus the contribution of
all directions with $y\neq0$ is
\[
 \sup_{q\in\R}\frac{1+sq}{\varphi(q)}.
\]
The directions with $y=0$ contribute $\abs{s}$.  They are already
included in this supremum: by \eqref{eq:phi-definition},
$\varphi(q)/\abs q\to1$ as $\abs q\to\infty$, and the quotient above
tends to $\abs{s}$ by taking $q\to+\infty$ when $s\geq0$ and
$q\to-\infty$ when $s<0$.  Consequently, the support profile of
$Z_n^\circ$ is
\begin{equation}\label{eq:polar-profile-definition}
 \varphi^\circ(s)
 :=h_{Z_n^\circ}(e_1,s)
 =\sup_{q\in\R}\frac{1+sq}{\varphi(q)},
 \qquad s\in\R.
\end{equation}
Since $Z_n^\circ$ is also invariant under rotations of the first
$n-1$ coordinates, homogeneity gives
\begin{equation}\label{eq:polar-support-profile}
 h_{Z_n^\circ}(x,t)
 =\abs{x}\,\varphi^\circ(t/\abs{x}),
 \qquad x\neq0.
\end{equation}
Next we evaluate the supremum in \eqref{eq:polar-profile-definition} to obtain a closed form for $\varphi^\circ$. For $s,q\geq0$, differentiation of the quotient in
\eqref{eq:polar-profile-definition} gives
$$
 \frac{\dd}{\dd q}\frac{1+sq}{\varphi(q)}
 =\frac{s\varphi(q)-(1+sq)\varphi'(q)}{\varphi(q)^2}=\frac{s\bigl(\varphi(q)-q\varphi'(q)\bigr)
          -\varphi'(q)}{\varphi(q)^2}.
$$
For completeness, differentiating \eqref{eq:phi-definition} yields
$$
 \varphi'(q)
 =-\frac{2q}{\pi}e^{-q^2/\pi}
   +\erf\!\left(\frac q{\sqrt\pi}\right)
   +q\frac{2}{\pi}e^{-q^2/\pi}=\erf\!\left(\frac q{\sqrt\pi}\right),
$$
and therefore
\[
 \varphi(q)-q\varphi'(q)=e^{-q^2/\pi}>0.
\]
Thus a critical point satisfies
\begin{equation}\label{eq:polar-stationary}
 s=\frac{\varphi'(q)}{\varphi(q)-q\varphi'(q)}
 =e^{q^2/\pi}\erf\!\left(\frac q{\sqrt\pi}\right).
\end{equation}
This critical point is the unique global maximizer.  Indeed, set
\[
 S(q):=e^{q^2/\pi}\erf\!\left(\frac q{\sqrt\pi}\right).
\]
Then
\[
 S'(q)=\frac2\pi+\frac{2q}{\pi}S(q)>0,
 \qquad S(0)=0,
 \qquad \lim_{q\to\infty}S(q)=\infty.
\]
Moreover, the sign of the derivative of the quotient is the sign of
$s-S(q)$.  Hence the quotient increases up to the unique solution of
$S(q)=s$ and decreases thereafter.  Finally, when $s\geq0$ it is
enough to consider $q\geq0$, because $\varphi$ is even and replacing a
negative $q$ by $\abs q$ does not decrease the numerator $1+sq$.
At that point, \eqref{eq:polar-stationary} also gives
\[
 1+sq
 =1+\frac{q\varphi'(q)}{\varphi(q)-q\varphi'(q)}
 =\frac{\varphi(q)}{\varphi(q)-q\varphi'(q)}.
\]
Consequently,
\begin{equation}\label{eq:polar-profile-value}
 \varphi^\circ(s)
 =\frac1{\varphi(q)-q\varphi'(q)}
 =e^{q^2/\pi}.
\end{equation}
Equivalently, on putting $q=\sqrt\pi a$,
\begin{equation}\label{eq:polar-parametric}
 s=e^{a^2}\erf(a),
 \qquad
 \varphi^\circ(s)=e^{a^2},
 \qquad a\geq0.
\end{equation}
The first function in \eqref{eq:polar-parametric} is strictly
increasing from $0$ to infinity.

Finally, let $q=q(s)$ denote the unique maximizing point in
 \eqref{eq:polar-profile-definition}.  Since $s=S(q)$ and $S'(q)>0$,
the inverse-function theorem shows that $q(s)$ is smooth for $s>0$.
Differentiating
\[
 \varphi^\circ(s)=\frac{1+sq(s)}{\varphi(q(s))}
\]
and using the stationarity equation
$s\varphi(q)=(1+sq)\varphi'(q)$, the terms containing $q'(s)$ cancel,
and hence $(\varphi^\circ)'(s)=q/\varphi(q)$.  It follows that
\begin{equation}\label{eq:polar-intercept}
 \varphi^\circ(s)-s(\varphi^\circ)'(s)
 =\frac1{\varphi(q)}\longrightarrow0
 \qquad(s\to\infty),
\end{equation}
because $q(s)\to\infty$.  Moreover,
$\varphi(q)/q\to1$, and therefore
\begin{equation}\label{sloplimit}
\lim_{s\to\infty}(\varphi^\circ)'(s)=1.
\end{equation}
Thus $\varphi^\circ$ has limiting slope one.  Moreover, the tangent
line to its graph at $s$ has vertical-axis intercept
$
 \varphi^\circ(s)-s(\varphi^\circ)'(s),$
and \eqref{eq:polar-intercept} shows that this intercept tends to zero as
$s\to\infty$. These two limits
will be used when applying the zonoid criterion of
Lemma~\ref{lem:Gaussian-mixture} below: in the notation of that lemma,
$\ell=1$ and $c=0$, respectively.

\subsection{Geometric features of \texorpdfstring{$Z_n$ and $Z_n^\circ$}{Z\_n and polar}}
In this subsection, we discuss several geometric features of $Z_n$ and
$Z_n^\circ$ that help illuminate the geometry of this dual pair of
zonoids.  Although these observations are not used directly in the
proof, they provide useful geometric intuition for the strikingly
different behavior of $Z_n$ and $Z_n^\circ$ near their poles.

The preceding formulas allow us to describe several geometric
features of both $Z_n$ and its polar.  We begin with a general
geometric interpretation of the polar parametrization.  Consider a body of revolution $K$ with differentiable support profile
$\psi(s):=h_K(e_1,s)$, so that
\[
 h_K(x,t)=\abs{x}\,\psi(t/\abs{x}), \mbox{ for } x\not 0.
\]
Recall that, whenever $h_K$ is differentiable at $u\neq0$,
$\nabla h_K(u)$ is the unique boundary point of $K$ at which $u$ is
an outer normal; see \cite[Corollary~1.7.3]{Schneider2014}.  Writing $r=\abs{x}$ and $q=t/r$, differentiation of
$h_K(x,t)=r\psi(q)$ gives
\[
 \nabla_x h_K(x,t)
 =\bigl(\psi(q)-q\psi'(q)\bigr)\frac{x}{\abs{x}},
 \qquad
 \partial_t h_K(x,t)=\psi'(q).
\]
Consequently, for $\theta\in\Sph^{n-2}$, the boundary point of $K$
having outer normal $(\theta,s)$ is
\begin{equation}\label{eq:revolution-boundary-point}
 \nabla h_K(\theta,s)
 =\bigl((\psi(s)-s\psi'(s))\theta,\,\psi'(s)\bigr).
\end{equation}
Thus $\psi(s)-s\psi'(s)$ has two simultaneous geometric meanings.  It
is the vertical-axis intercept of the tangent line to the graph
of $\psi$ at $s$, and it is the horizontal radius of the corresponding
boundary point.  In particular, if
$\psi'(s)\to1$ and $\psi(s)-s\psi'(s)\to c$ as $s\to\infty$, then the
top supporting hyperplane meets the body in a disk of radius $c$.  The
case $c=0$ gives a singleton top exposed face rather than a facet.

This tangent-intercept description also makes the polar calculation
natural.  For $q\geq0$ and $\theta\in\Sph^{n-2}$, put
\[
 u(q,\theta):=\frac{(\theta,q)}{\varphi(q)}.
\]
Formula \eqref{eq:revolution-boundary-point} places $
 \nabla h_{Z_n}(\theta,q)
 =\bigl((\varphi(q)-q\varphi'(q))\theta,\varphi'(q)\bigr)
$
on $\partial Z_n$, while $h_{Z_n}(u(q,\theta))=1$ places
$u(q,\theta)$ on $\partial Z_n^\circ$.  Moreover,
\[
 \ip{\nabla h_{Z_n}(\theta,q)}{u(q,\theta)}
 =\frac{\varphi(q)-q\varphi'(q)+q\varphi'(q)}{\varphi(q)}=1.
\]
Consequently, $\nabla h_{Z_n}(\theta,q)$ is a supporting normal to
$Z_n^\circ$ at $u(q,\theta)$.  Dividing that normal by its horizontal
component $\varphi(q)-q\varphi'(q)$ gives the slope
\[
 s=\frac{\varphi'(q)}{\varphi(q)-q\varphi'(q)},
\]
and evaluating the support function of the polar in the normalized
normal direction $(\theta,s)$ gives
\[
 \varphi^\circ(s)
 =\ip{(\theta,s)}{u(q,\theta)}
 =\frac1{\varphi(q)-q\varphi'(q)}.
\]
This is the geometric reason for both
\eqref{eq:polar-stationary} and \eqref{eq:polar-profile-value}: the reciprocal of the vertical-axis intercept of the tangent line to
the support profile of $Z_n$ becomes the corresponding support value
of $Z_n^\circ$.

The same parametrization describes exactly what happens near the north and the south
poles.  Write $r$ for the horizontal radius and $z$ for the last
coordinate in a meridian plane.  Setting $q=\sqrt\pi a$ in
\eqref{eq:phi-identities} and
\eqref{eq:revolution-boundary-point} gives the upper
boundary of the meridian section of $Z_n$ as
\begin{equation}\label{eq:Zn-boundary-parametrization}
 r_{Z_n}(a)=e^{-a^2},\qquad z_{Z_n}(a)=\erf(a),\qquad a\geq0.
\end{equation}
Geometrically, the normal $(\theta,\sqrt\pi a)$ selects from almost every
segment $S_v$ in \eqref{randomzon}  the endpoint having the sign of $\ip{\theta}{v}+\sqrt\pi a$; averaging these selected endpoints gives
the point in \eqref{eq:Zn-boundary-parametrization}.  The standard
error-function tail estimate, obtained by integration by parts, is
\[
 1-\erf(a)
 =\frac{e^{-a^2}}{\sqrt\pi a}
  \bigl(1+O(a^{-2})\bigr).
\]
Eliminating $a$ by
$a=\sqrt{\log(1/r)}$ and writing the upper meridian as
$z_{Z_n}=z_{Z_n}(r)$, it follows that, as $r\downarrow0$,
\begin{equation}\label{eq:Zn-pole-asymptotic}
 1-z_{Z_n}(r)
 =\frac{r}{\sqrt{\pi\log(1/r)}}
  \left(1+O\!\left(\frac1{\log(1/r)}\right)\right).
\end{equation}
In fact,
\[
 z_{Z_n}'(r)=-\frac1{\sqrt{\pi\log(1/r)}}\longrightarrow0,
 \qquad
 z_{Z_n}''(r)
 =-\frac1{2\sqrt\pi\,r(\log(1/r))^{3/2}}
 \longrightarrow-\infty.
\]
For comparison, the unit circle satisfies
$1-z=r^2/2+O(r^4)$ near its north pole.  Thus $Z_n$ has a unique
horizontal tangent there and is $C^1$, but not $C^2$, at the pole; its
meridional curvature tends to infinity.  This accounts for the pointed
appearance in the figure, although the pole is not a conical vertex.

For the polar, set
\[
 \Phi(a):=\varphi(\sqrt\pi a)
 =e^{-a^2}+\sqrt\pi a\erf(a).
\]
The radial formula
$u(q,\theta)=(\theta,q)/\varphi(q)$, with $q=\sqrt\pi a$, gives
 the upper boundary of the meridian section of
$Z_n^\circ$:
\begin{equation}\label{eq:polar-boundary-parametrization}
 r_{Z_n^\circ}(a)=\frac1{\Phi(a)},
 \qquad
 z_{Z_n^\circ}(a)=\frac{\sqrt\pi a}{\Phi(a)},
 \qquad a\geq0.
\end{equation}
Since 
 $\Phi'(a)=\sqrt\pi\,\erf(a)>0,$ for $a>0$,
$r_{Z_n^\circ}(a)$ decreases from $1$ to $0$; hence $a$ can be
eliminated and the upper meridian can be written as a graph
$z_{Z_n^\circ}=z_{Z_n^\circ}(r)$.
The quantity by which the numerator in the second coordinate falls
short of its denominator is strictly positive:
$$
 \Phi(a)-\sqrt\pi a
 =e^{-a^2}-\sqrt\pi a\bigl(1-\erf(a)\bigr)=2\int_a^\infty (t-a)e^{-t^2}\,\dd t>0.
$$
Hence $z_{Z_n^\circ}(a)<1$ for every finite $a$: the top exposed face
is again a singleton. The next term in the
error-function tail expansion gives
\[
 \Phi(a)-\sqrt\pi a
 =\frac{e^{-a^2}}{2a^2}\bigl(1+O(a^{-2})\bigr).
\]
Since $r=r_{Z_n^\circ}(a)=1/\Phi(a)$, it follows that
\[
 r=\frac1{\sqrt\pi a}
 \left(1+O\!\left(\frac{e^{-a^2}}{a^3}\right)\right),
 \qquad
 \frac1{\pi r^2}=a^2+O\!\left(\frac{e^{-a^2}}a\right).
\]
Consequently,
$$
 1-z_{Z_n^\circ}(r)
 =\frac{\Phi(a)-\sqrt\pi a}{\Phi(a)}=\frac{e^{-a^2}}{2\sqrt\pi\,a^3}
   \bigl(1+O(a^{-2})\bigr)=\frac\pi2 r^3
   \exp\!\left(-\frac1{\pi r^2}\right)
   \bigl(1+O(r^2)\bigr).
$$
Thus
\begin{equation}\label{eq:polar-pole-asymptotic}
 1-z_{Z_n^\circ}(r)
 =\frac\pi2 r^3\exp\!\left(-\frac1{\pi r^2}\right)
  \bigl(1+O(r^2)\bigr)
 \qquad(r\downarrow0).
\end{equation}
The right-hand side is $o(r^N)$ for every $N>0$.  In this precise
sense the polar is flat to every algebraic order at its pole, which
explains why the drawing suggests a facet even though the top exposed
face is only a singleton.  Polarity therefore exchanges a rapidly
bending $C^1$ pole of $Z_n$ with an extremely flat singleton pole of
$Z_n^\circ$: the first pole is not a conical corner, and the second is
not a facet.

Figure~\ref{fig:meridian-sections} displays these two
dimension-independent meridian sections at the same scale.

\begin{figure}[H]
\centering
\begin{tikzpicture}[
  x=2.05cm,y=2.05cm,
  line cap=round,line join=round,
  every node/.style={font=\small,text=black!75!black}
]
\begin{scope}[xshift=-3cm]
  \path[fill=black!7,draw=black!75!black,line width=.75pt]
    (0,-1)
    plot[domain=-89.5:89.5,samples=181,variable=\b]
      ({exp(-tan(\b)^2/pi)},
       {tikzerf(tan(\b)/sqrt(pi))})
    -- (0,1)
    plot[domain=-89.5:89.5,samples=181,variable=\b]
      ({-exp(-tan(\b)^2/pi)},
       {-tikzerf(tan(\b)/sqrt(pi))})
    -- cycle;
  \draw[black!40,dashed,line width=.55pt] (0,0) circle[radius=1];
  \draw[black!45,line width=.35pt] (-1.08,0)--(1.08,0);
  \draw[black!45,densely dotted,line width=.45pt]
    (0,-1.08)--(0,1.08);
  \node[above] at (0,1.18) {$Z_n$};
  \node[font=\scriptsize,fill=white,inner sep=1pt]
    at (.72,.78) {$B_2^2$};
  \node[below] at (1.08,0) {$x_1$};
  \node[left] at (0,.92) {$x_n$};
\end{scope}

\begin{scope}[xshift=3cm]
  \path[fill=black!16,draw=black!75!black,line width=.75pt]
    (0,-1)
    plot[domain=-89.5:89.5,samples=181,variable=\b]
      ({1/(exp(-tan(\b)^2/pi)
          +tan(\b)*tikzerf(tan(\b)/sqrt(pi)))},
       {tan(\b)/(exp(-tan(\b)^2/pi)
          +tan(\b)*tikzerf(tan(\b)/sqrt(pi)))})
    -- (0,1)
    plot[domain=-89.5:89.5,samples=181,variable=\b]
      ({-1/(exp(-tan(\b)^2/pi)
          +tan(\b)*tikzerf(tan(\b)/sqrt(pi)))},
       {-tan(\b)/(exp(-tan(\b)^2/pi)
          +tan(\b)*tikzerf(tan(\b)/sqrt(pi)))})
    -- cycle;
  \draw[black!40,dashed,line width=.55pt] (0,0) circle[radius=1];
  \draw[black!45,line width=.35pt] (-1.08,0)--(1.08,0);
  \draw[black!45,densely dotted,line width=.45pt]
    (0,-1.08)--(0,1.08);
  \node[above] at (0,1.18) {$Z_n^\circ$};
  \node[below] at (1.08,0) {$x_1$};
  \node[left] at (0,.92) {$x_n$};
\end{scope}
\end{tikzpicture}
\caption{The dimension-independent meridian sections
$Z_n\cap\operatorname{span}\{e_1,e_n\}$ and
$Z_n^\circ\cap\operatorname{span}\{e_1,e_n\}$, drawn at the same
scale.  The dashed circle is the meridian section of $B_2^n$.}
\label{fig:meridian-sections}
\end{figure}

\section{The polar is a zonoid}
\label{sec:polar-zonoid}

Section~\ref{sec:gaussian-zonoid} gives an explicit formula
for the support profile $\varphi^\circ$ of $Z_n^\circ$.  In this section, we first
prove Lemma \ref{lem:Gaussian-mixture} which is a criterion ensuring that an even function is the support
profile of a zonoid of revolution.  The formulas obtained in
Section~\ref{sec:gaussian-zonoid} verify the remaining hypotheses of
this criterion, including the intercept condition
\eqref{eq:polar-intercept}; the substantive point left to prove is
that
\[
 u\longmapsto(\varphi^\circ)''(\sqrt u)
\]
is completely monotone.  We then establish this complete monotonicity and apply
Lemma~\ref{lem:Gaussian-mixture} to represent $h_{Z_n^\circ}$ as a
positive integral of support functions of Gaussian zonoids.  This
proves that $Z_n^\circ$ is itself a zonoid.

\subsection{A criterion for zonoids
of revolution}
\label{subsec:zonoid-criterion}

The representation of an even density as a positive integral of
centered Gaussian densities with varying variances---usually called a
Gaussian scale mixture---is classical.  Andrews and Mallows proved
that a density of the form $p(s)=f(s^2)$ admits such a representation
if and only if $f$ is completely monotone
\cite{AndrewsMallows1974}; see also \cite{West1987}.

The following lemma may be viewed as a support-function version of
this principle.  It converts complete monotonicity of a one-variable
function into a zonoid representation valid in every dimension.
Since the support profile is even, recovering it from its second
derivative introduces an additive constant.  The lemma includes a condition ensuring that this constant is
nonnegative.  Upon homogenization, the constant becomes the term
$c\abs{x}$, which is the support function of the Euclidean ball
$cB_2^{n-1}\times\{0\}$ in the equatorial subspace.  The new
ingredient needed for our construction is therefore the
complete-monotonicity result established in
Theorem~\ref{thm:complete-monotonicity}.

\begin{lemma}\label{lem:Gaussian-mixture}
Let $\psi:\R\to\R$ be even, convex, and of class $C^2$.  Assume, in
addition, that the function $u\longmapsto\psi''(\sqrt{u})$
is completely monotone  on $(0,\infty)$ and that the finite limit
\begin{equation}\label{eq:mixture-intercept}
 c:=\lim_{s\to\infty}\bigl(\psi(s)-s\psi'(s)\bigr)
\end{equation}
exists and is nonnegative. Then the limit
\[
 \ell:=\lim_{s\to\infty}\psi'(s)
\]
exists and is finite.  Moreover, for every $n\geq2$, the  function
\begin{equation}\label{eq:Hn-profile}
 H_n(x,t)=\abs{x}\psi(t/\abs{x}),
 \qquad (x,t)\in\R^{n-1}\times\R,\quad x\neq0,
\end{equation}
extended to the $t$-axis by
\[
 H_n(0,t)=\ell\abs{t},
\]
is the support function of a possibly lower-dimensional zonoid in
$\R^n$.  This zonoid has nonempty interior whenever $\ell>0$.

More precisely, there is a finite positive measure $\eta$ on
$(0,\infty)$ such that
\begin{equation}\label{eq:psi-Gaussian-mixture}
\psi(s)=c+\int_{(0,\infty)}\E\abs{\xi_\lambda+s}
 \,\dd\eta(\lambda),
\end{equation}
 where $\xi_\lambda$ has density
$
r\to\sqrt{\frac\lambda\pi}e^{-\lambda r^2}.
$
Consequently,
\begin{equation}\label{eq:Hn-zonoid-representation}
 H_n(x,t)
 =c\abs{x}
 +\int_{(0,\infty)}\int_{\R^{n-1}}
 \abs{\ip{x}{z}+t}\,
 \dd\gamma_{\lambda,n-1}(z)\,\dd\eta(\lambda),
\end{equation}
 where $\gamma_{\lambda,n-1}$ is  the  Gaussian probability measure
on $\R^{n-1}$ with density
$z\to
 \left(\frac\lambda\pi\right)^{(n-1)/2}e^{-\lambda\abs{z}^2}.
$
\end{lemma}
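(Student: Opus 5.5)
Since $f(u):=\psi''(\sqrt u)$ is completely monotone on $(0,\infty)$, the Hausdorff--Bernstein--Widder theorem \eqref{eq:HBW} gives a positive measure $\nu$ on $[0,\infty)$ with
\[
 \psi''(s)=\int_{[0,\infty)}e^{-\lambda s^2}\,\dd\nu(\lambda),\qquad s\neq0 .
\]
The plan is to match this with the second derivative of the elementary profiles $g_\lambda(s):=\E\abs{\xi_\lambda+s}$. By the folded-normal computation behind \eqref{eq:phi-identities}, $g_\lambda'(s)=\erf(\sqrt\lambda s)$ and $g_\lambda''(s)=2\sqrt{\lambda/\pi}\,e^{-\lambda s^2}$. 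I therefore set $\dd\eta(\lambda):=\tfrac12\sqrt{\pi/\lambda}\,\dd\nu(\lambda)$ on $(0,\infty)$, which gives $\psi''(s)=\nu(\{0\})+\int g_\lambda''(s)\,\dd\eta(\lambda)$.

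The next step is to show that $\nu(\{0\})=0$ and that $\eta$ is finite, using the intercept hypothesis. Put $I(s):=\psi(s)-s\psi'(s)$. Then $I'(s)=-s\psi''(s)$, so $I$ is nonincreasing on $[0,\infty)$, and
\[
 I(0)-c=\int_0^\infty s\psi''(s)\,\dd s<\infty .
\]
By Tonelli, $\int_0^\infty s e^{-\lambda s^2}\,\dd s=1/(2\lambda)$, which is infinite at $\lambda=0$. Hence $\nu(\{0\})=0$ and $\int\lambda^{-1}\,\dd\nu<\infty$. Since $\psi'(0)=0$ by evenness, $\psi'(s)=\int_0^s\psi''$. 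As $s\to\infty$, monotone convergence gives
\[
 \ell=\int_0^\infty\psi''(r)\,\dd r=\int\tfrac12\sqrt{\pi/\lambda}\,\dd\nu=\eta((0,\infty)).
\]
The main obstacle is that this need not be finite from $\int\lambda^{-1}\dd\nu<\infty$ alone, because small $\lambda$ is controlled but large $\lambda$ may not be. My first attempt is to use $C^2$ regularity at $0$: $\psi''(0)=\nu([0,\infty))<\infty$, so $\nu$ is finite. Then $\lambda^{-1/2}\le\max(\lambda^{-1},1)$ yields $\eta$ finite and hence $\ell<\infty$.

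Next comes the identification of $\psi$. Define $G(s):=\int g_\lambda(s)\,\dd\eta(\lambda)$. This is finite because $g_\lambda(s)\le\E\abs{\xi_\lambda}+\abs s$ and $\E\abs{\xi_\lambda}=(\pi\lambda)^{-1/2}$, which is $\eta$-integrable near $0$ by the bound $\int\lambda^{-1}\dd\nu<\infty$ (since $\eta\sim\lambda^{-1/2}\nu$). Differentiating under the integral by dominated convergence gives $G''=\psi''$, and $G'(0)=0=\psi'(0)$. Therefore $\psi-G$ is a constant $c'$. To find $c'$, I compute $g_\lambda(s)-sg_\lambda'(s)=\lambda^{-1/2}\pi^{-1/2}e^{-\lambda s^2}$ (again by \eqref{eq:phi-identities}) and integrate against $\eta$. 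Dominated convergence as $s\to\infty$ shows that $G-sG'\to0$, hence $c'=c$. This proves \eqref{eq:psi-Gaussian-mixture}.

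Finally I homogenize. For $x\neq0$, the distribution of $\ip{x}{z}$ under $\gamma_{\lambda,n-1}$ is that of $\abs x\xi_\lambda$, so
\[
 \abs x\,g_\lambda(t/\abs x)=\int\abs{\ip xz+t}\,\dd\gamma_{\lambda,n-1}(z).
\]
Summing these terms gives \eqref{eq:Hn-zonoid-representation}. At $x=0$ the right-hand side equals $\eta((0,\infty))\abs t=\ell\abs t$, which agrees with the prescribed extension. The term $c\abs x$ is the support function of $cB_2^{n-1}\times\{0\}$, a zonoid since $c\ge0$. The double integral is the support function of a Minkowski integral of segments, and it has finite total mass because $\int(\lambda^{-1/2}+1)\,\dd\eta<\infty$. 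So by \eqref{eq:random-zonoid} and closure under Minkowski addition, $H_n$ is the support function of a zonoid. When $\ell>0$, $\eta\neq0$, so the random vector $(z,1)$ with $z$ Gaussian is not concentrated in a proper subspace, by the variance argument of Section~\ref{sec:gaussian-zonoid}. Hence the zonoid has nonempty interior.
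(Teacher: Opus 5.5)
Your proof is correct and follows essentially the same route as the paper. The shared steps are: the Hausdorff--Bernstein--Widder representation of $\psi''$; the intercept identity $\psi(0)-c=\int_0^\infty r\psi''(r)\dd r$ combined with Tonelli, which forces $\nu(\{0\})=0$ and $\int\lambda^{-1}\dd\nu<\infty$; finiteness of $\nu$ from $\psi''(0)<\infty$; the reweighting $\dd\eta=\tfrac{\sqrt\pi}{2\sqrt\lambda}\dd\nu$; and homogenization by rotational invariance. The only cosmetic difference is how the additive constant is fixed. You let $s\to\infty$ in $G-sG'$, whereas the paper matches values at $s=0$ via $\psi(0)-c=\int(2\lambda)^{-1}\dd\nu$; both rest on the same moment identity.
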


\begin{proof}
The Hausdorff--Bernstein--Widder theorem, in the form
recalled in \eqref{eq:HBW}, provides a finite positive measure $\nu$
on $[0,\infty)$ such that
\begin{equation}\label{eq:psi-second-Laplace}
 \psi''(s)=\int_{[0,\infty)}e^{-\lambda s^2}
 \,\dd\nu(\lambda),
 \qquad s\geq0.
\end{equation}
Since $\psi$ is even, $\psi'(0)=0$, and
\[
 \psi(s)-s\psi'(s)
 =\psi(0)-\int_0^s r\psi''(r)\,\dd r.
\]
Using \eqref{eq:mixture-intercept}, Tonelli's theorem, and
\eqref{eq:psi-second-Laplace}, one first obtains, as an identity of
extended nonnegative integrals,
\[
 \psi(0)-c
 =\int_0^\infty r\psi''(r)\,\dd r
 =\int_{[0,\infty)}
   \left(\int_0^\infty r e^{-\lambda r^2}\,\dd r\right)
   \dd\nu(\lambda).
\]
The left-hand side is finite, whereas the inner integral is
$+\infty$ when $\lambda=0$.  Hence $\nu(\{0\})=0$.  For
$\lambda>0$ the inner integral equals $(2\lambda)^{-1}$, and therefore
\begin{equation}\label{eq:nu-minus-one-moment}
 \psi(0)-c
 =\int_0^\infty r\psi''(r)\,\dd r
 =\int_{(0,\infty)}\frac1{2\lambda}\,\dd\nu(\lambda).
\end{equation}
Define
\begin{equation}\label{eq:eta-definition}
 \dd\eta(\lambda):=
 \frac{\sqrt\pi}{2\sqrt\lambda}\,\dd\nu(\lambda).
\end{equation}
The measure $\eta$ is finite.  On $(0,1)$ this follows from
$\lambda^{-1/2}\leq\lambda^{-1}$ and
\eqref{eq:nu-minus-one-moment}; on $[1,\infty)$ it follows from the
finiteness of $\nu$.

We next verify that the limiting slope is finite.  Since $\psi$ is
even, $\psi'(0)=0$, and therefore
\[
 \ell=\lim_{s\to\infty}\psi'(s)
 =\int_0^\infty\psi''(r)\,\dd r<\infty;
\]
indeed, $\psi''$ is continuous near zero and
$\psi''(r)\leq r\psi''(r)$ for $r\geq1$.
Moreover, Tonelli's theorem and
\eqref{eq:eta-definition} give
\begin{equation}\label{eq:eta-total-mass}
 \eta((0,\infty))
 =\int_{(0,\infty)}\frac{\sqrt\pi}{2\sqrt\lambda}
       \,\dd\nu(\lambda)
 =\int_0^\infty\psi''(r)\,\dd r
 =\ell.
\end{equation}
Put $F_\lambda(s)=\E\abs{\xi_\lambda+s}$.  Direct differentiation
gives
\begin{equation}\label{eq:F-lambda-data}
 F_\lambda''(s)=2\sqrt{\frac\lambda\pi}e^{-\lambda s^2},
 \qquad
 F_\lambda(0)=\frac1{\sqrt{\pi\lambda}},
 \qquad
 F_\lambda'(0)=0.
\end{equation}
Define, for $s\in\R$,
\[
 \widetilde\psi(s)
:=c+\int_{(0,\infty)}F_\lambda(s)\,\dd\eta(\lambda).
\]
This integral is finite.  Indeed, by the triangle inequality and
\eqref{eq:F-lambda-data},
\[
 F_\lambda(s)
 \leq \abs{s}+F_\lambda(0)
 =\abs{s}+\frac1{\sqrt{\pi\lambda}},
\]
while
\[
 \int_{(0,\infty)}\lambda^{-1/2}\,\dd\eta(\lambda)
 =\frac{\sqrt\pi}{2}
   \int_{(0,\infty)}\lambda^{-1}\,\dd\nu(\lambda)
 <\infty
\]
by \eqref{eq:nu-minus-one-moment}.  Moreover,
\[
 \abs{F_\lambda'(s)}\leq1,
 \qquad
 0\leq F_\lambda''(s)
 \leq2\sqrt{\frac{\lambda}{\pi}},
\]
and
\[
 \int_{(0,\infty)}\lambda^{1/2}\,\dd\eta(\lambda)
 =\frac{\sqrt\pi}{2}\nu((0,\infty))<\infty.
\]
Therefore, dominated convergence justifies differentiating twice
under the integral sign.

Using \eqref{eq:eta-definition} and
\eqref{eq:F-lambda-data}, we obtain
$$
 \widetilde\psi''(s)
 =\int_{(0,\infty)}
   2\sqrt{\frac{\lambda}{\pi}}e^{-\lambda s^2}
\,\dd\eta(\lambda)=\int_{(0,\infty)}e^{-\lambda s^2}\,\dd\nu(\lambda)
 =\psi''(s),
 $$
where the last equality follows from
\eqref{eq:psi-second-Laplace}.  Moreover,
$$
 \widetilde\psi(0)
 =c+\int_{(0,\infty)} \frac1{\sqrt{\pi\lambda}}\,\dd\eta(\lambda) =c+\int_{(0,\infty)}
   \frac1{2\lambda}\,\dd\nu(\lambda)
 =\psi(0)
$$
by \eqref{eq:nu-minus-one-moment}, while
\[
 \widetilde\psi'(0)
 =\int_{(0,\infty)}F_\lambda'(0)\,\dd\eta(\lambda)
 =0=\psi'(0).
\]
Thus $\widetilde\psi-\psi$ has zero second derivative and vanishing
value and derivative at zero.  Hence $\widetilde\psi=\psi$ on
$[0,\infty)$, and therefore on $\R$ by evenness.  This proves
\eqref{eq:psi-Gaussian-mixture}.

Rotational invariance gives, for $x\neq0$,
\[
 \int_{\R^{n-1}}\abs{\ip{x}{z}+t}\,
 \dd\gamma_{\lambda,n-1}(z)
 =\abs{x}F_\lambda(t/\abs{x}).
\]
Therefore \eqref{eq:Hn-zonoid-representation} holds and the integral in \eqref{eq:Hn-zonoid-representation} is finite: in addition to $\eta((0,\infty))<\infty$,
\[
 \int_{(0,\infty)}\lambda^{-1/2}\,\dd\eta(\lambda)
 =\frac{\sqrt\pi}{2}
 \int_{(0,\infty)}\lambda^{-1}\,\dd\nu(\lambda)<\infty
\]
by \eqref{eq:nu-minus-one-moment}.  After normalizing the
finite measure $\eta$ when it is nonzero, the integral term in
\eqref{eq:Hn-zonoid-representation} is a positive multiple of a
support function of the form \eqref{eq:random-zonoid}; if $\eta=0$,
the assertion is immediate.  The term $c\abs{x}$ is the support function
of an equatorial Euclidean ball and is also a zonoid. By \eqref{eq:eta-total-mass}, the same representation at
$x=0$ equals $\eta((0,\infty))\abs t=\ell\abs t$, as required.  If
$\ell>0$, the integral term is positive for every nonzero $(x,t)$:
for $x\neq0$ the Gaussian affine functional is not almost surely
zero, while for $x=0$ its value is $\ell\abs t$.  Hence $H_n$ is
positive away from the origin, and the represented zonoid has
nonempty interior.  This proves the claim.
\end{proof}

We now specialize the criterion to
$\psi=\varphi^\circ$.  Formula
\eqref{eq:polar-profile-definition} shows that $\varphi^\circ$ is even
and convex.   For smoothness, consider the function
\[
 a\longmapsto e^{a^2}\erf(a),\qquad a\in\R.
\]
It is odd and smooth, tends to $\pm\infty$ as $a\to\pm\infty$, and has
derivative
\[
 \frac2{\sqrt\pi}+2ae^{a^2}\erf(a)>0.
\]
It is therefore a smooth bijection of $\R$ onto itself with nowhere
vanishing derivative.  Its inverse is smooth and odd, and
\eqref{eq:polar-parametric} consequently defines $\varphi^\circ$ as a
smooth even function.  Finally, \eqref{eq:polar-intercept} and
\eqref{sloplimit} give $c=0$ and $\ell=1$,
respectively,  while \eqref{eq:polar-support-profile} identifies the
one-homogeneous function in \eqref{eq:Hn-profile} with
$h_{Z_n^\circ}$ for $x\neq0$; equality at $x=0$ follows by continuity
and $\ell=1$.  Thus the only remaining hypothesis is the complete
monotonicity of
\[
 u\longmapsto(\varphi^\circ)''(\sqrt u).
\]

\subsection{A complete-monotonicity
theorem involving the error function}
\label{sec:complete-monotonicity}

We now verify this remaining analytic hypothesis.  The
scaling in \eqref{eq:polar-parametric} is simplified by introducing
\begin{equation}\label{eq:G-g-definition}
 G(a):=\sqrt\pi e^{a^2}\erf(a),
 \qquad g(y):=e^{(G^{-1}(y))^2},
 \qquad a,y\geq0.
\end{equation}
Since $G(a)=\sqrt\pi s$ in \eqref{eq:polar-parametric},
\begin{equation}\label{eq:polar-g-scaling}
 \varphi^\circ(s)=g(\sqrt\pi s), \mbox{ for }  s\ge0.
\end{equation}
The central analytic statement is the following.

\begin{theorem}\label{thm:complete-monotonicity}
For $G$ and $g$ defined by \eqref{eq:G-g-definition}, the function
\begin{equation}\label{eq:Q-definition}
 Q(u):=g''(\sqrt u),\qquad u\geq0,
\end{equation}
is completely monotone.
\end{theorem}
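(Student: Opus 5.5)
The plan is to compute $Q$ explicitly in the parameter $a=G^{-1}(y)$ and then prove complete monotonicity by writing $Q=e^{-B}$ with $B$ a Bernstein function, using facts (i)--(iii) of Subsection~\ref{compmonotone}.

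\emph{Step 1: explicit formula for $Q$.} Differentiating $G(a)=\sqrt\pi e^{a^2}\erf(a)$ gives $G'(a)=2+2aG(a)$. Put $\Phi(a)=e^{-a^2}+\sqrt\pi a\erf(a)$ as in Section~\ref{sec:gaussian-zonoid}. Then $G'(a)=2e^{a^2}\Phi(a)$ and $\Phi(a)-a\Phi'(a)=e^{-a^2}$. The chain rule yields
\[
 g'(y)=\frac{a}{\Phi(a)},
 \qquad
 g''(y)=\frac{\Phi(a)-a\Phi'(a)}{\Phi(a)^2\,G'(a)}
 =\frac{e^{-2a^2}}{2\,\Phi(a)^3},
\]
which is consistent with $(\varphi^\circ)'(s)=q/\varphi(q)$ from Section~\ref{sec:gaussian-zonoid}. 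Hence, with $u=G(a)^2=\pi e^{2a^2}\erf(a)^2$,
\[
 Q(u)=\tfrac12\exp\!\bigl(-2a^2-3\log\Phi(a)\bigr).
\]
Since $u\mapsto a$ is a smooth increasing bijection of $[0,\infty)$, the problem reduces to showing that
\[
 B(u):=2a^2+3\log\Phi(a)
\]
is a Bernstein function of $u$. By fact~(iii), $e^{-B}$ is then completely monotone, and so is $Q=\tfrac12e^{-B}$. The function $B$ is nonnegative, since $\Phi\ge1$ (it is $\E\abs{\gamma+s}\ge\abs{\E(\gamma+s)}$ at $s=0$ and increasing). Moreover
\[
 B'(u)=\frac{4a\Phi(a)+3\sqrt\pi\erf(a)}{4\,G(a)\,e^{a^2}\Phi(a)^2}.
\]
It therefore remains to show that this expression, as a function of $u$, is completely monotone.

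\emph{Step 2: splitting $B'$.} I would split
\[
 B'=\frac{a}{G e^{a^2}\Phi}+\frac{3}{4}\cdot\frac{\sqrt\pi\erf(a)}{G e^{a^2}\Phi^2}
\]
and try to show that each summand is completely monotone in $u$. Each is a product of simpler pieces, so by fact~(i) it suffices to handle the pieces. One piece is $\sqrt\pi\erf(a)/G=e^{-a^2}$. Another is $e^{-a^2}$ itself, which in terms of $u$ equals $\bigl(\sqrt\pi\erf(a)\bigr)/\sqrt u$. A third is $1/\Phi=e^{-a^2}/(1+a\sqrt u)$. This reduces everything to complete monotonicity of a few elementary functions of $u$, such as $e^{-a^2}$, $a/\sqrt u$ and $1/(1+a\sqrt u)$. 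For functions of the last type I would use fact~(ii): $1/(1+x)$ is completely monotone, so it suffices that $a\sqrt u$ is a Bernstein function of $u$.

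\emph{Step 3 (main obstacle): complete monotonicity of the basic pieces.} Everything now rests on complete monotonicity, as functions of $u$, of expressions involving the inverse function $a(u)$. This is where I expect the real difficulty. My first attempt would be to show that these pieces are Stieltjes functions, or that the relevant Bernstein functions are complete Bernstein. Concretely, I would extend $u\mapsto a(u)$ analytically to $\mathbb C\setminus(-\infty,0]$, which is possible because $G^2$ is entire with $G'\neq0$ along the relevant preimage region. I would then verify the Nevanlinna--Pick sign condition $\operatorname{Im}f(u)\ge0$ for $\operatorname{Im}u>0$ by tracking the boundary values on the cut. On the negative axis, $u<0$ corresponds to imaginary $a=ib$, where $G(ib)=i\sqrt\pi e^{-b^2}\operatorname{erfi}(b)$ is explicit. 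This would give explicit positive densities in the Stieltjes representation.

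If the Pick-function route proves too delicate, the fallback is a direct argument. I would write $e^{-a^2}$ as a function of $u$ via a Laplace-type integral obtained from the Lagrange--Bürmann inversion formula, and check positivity of the resulting coefficient kernel. In either route, the positivity verification on the cut is the step I expect to require the most work.
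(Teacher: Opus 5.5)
\textbf{There is a genuine gap: Step~3, complete monotonicity of $P(u):=1/(1+a\sqrt u)$, is the entire content of the theorem, and you leave it open.}

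Your Steps~1 and~2 are correct and reproduce the paper's reduction. Since $1+a\sqrt u=1+aG(a)=e^{a^2}\Phi(a)$, your two summands are $\frac{a}{\sqrt u}P$ and $\frac34P^2$. That is, $B'=\frac{1-P}{u}+\frac34P^2$, exactly as in the paper.

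There is one slip: $1/\Phi=e^{a^2}P$, not $e^{-a^2}P$. So $1/\Phi$ carries the increasing factor $e^{a^2}$ and must not be treated as a separate piece; regroup as above.

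Every remaining piece follows from complete monotonicity of $P$ alone. The identity $(G^{-1})'(y)=\frac{1}{2(1+yG^{-1}(y))}$ gives $a=\frac{\sqrt u}{2}\int_0^1P(ut^2)\,\dd t$, hence $a/\sqrt u=\frac12\int_0^1P(ut^2)\,\dd t$.

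Your reduction ``it suffices that $a\sqrt u$ is Bernstein'' is circular. By the same identity, $a\sqrt u=\frac u2\int_0^1P(ut^2)\,\dd t=(\mathcal LP)(u)$. So that statement is equivalent to the one you want to prove.

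\textbf{The missing idea.} Use this self-referential identity as a fixed-point equation $P=1/(1+\mathcal LP)$.
\begin{itemize}
\item The operator $\mathcal L$ sends completely monotone functions to Bernstein functions. To check this, take $f(u)=e^{-ru}$; then $(\mathcal Lf)'(u)=\frac14\int_0^1e^{-rut^2}\,\dd t+\frac14e^{-ru}$, which is completely monotone.
\item Hence the iterates $P_0=1$, $P_{k+1}=1/(1+\mathcal LP_k)$ are all completely monotone.
\item The estimate $\abs{P_k-P}\le\mathcal L^k1=\frac{k!}{(2k)!}u^k\to0$ passes complete monotonicity to the limit.
\end{itemize}

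\textbf{Your proposed routes.} They are not carried out, and they are harder than you indicate.
\begin{itemize}
\item On the cut, $u=-w^2$ corresponds to $a=ib$ only while $w=\sqrt\pi e^{-b^2}\operatorname{erfi}(b)$. This is a bounded function of $b$, with its maximum where $G'(ib)=2e^{-b^2}\Phi(ib)=0$.
\item For $u$ below roughly $-1.17$, the preimage leaves the imaginary axis and the boundary values are no longer explicit.
\item Even continuing $a(u)$ to $\mathbb C\setminus(-\infty,0]$ requires control of the complex zeros of $\Phi$.
\item The Lagrange--B\"urmann fallback is only named; no positive kernel is exhibited.
\end{itemize}

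Incidentally, the Pick-function statement you aim at appears to hold. For $f(u)=1/(u+s)$ one gets $(\mathcal Lf)(u)=\frac12\sqrt{u/s}\arctan\sqrt{u/s}$, a complete Bernstein function. So $\mathcal L$ maps Stieltjes functions to complete Bernstein functions, and the same iteration shows $P$ is Stieltjes. But the natural proof is again this fixed-point argument, not a boundary-value computation.
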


The proof will be divided into two lemmas.  The first one identifies an
operator preserving complete monotonicity.

\begin{lemma}\label{lem:L-preserves-Bernstein}
Let $f:[0,\infty)\to[0,\infty)$ be completely monotone and
$f(0)<\infty$.  Define
\begin{equation}\label{eq:L-operator}
 (\mathcal Lf)(u):=\frac u2\int_0^1 f(ut^2)\,\dd t.
\end{equation}
Then $\mathcal Lf$ is a Bernstein function.  Consequently,
\begin{equation}\label{eq:S-operator}
 \mathcal Sf:=\frac1{1+\mathcal Lf}
\end{equation}
is completely monotone.
\end{lemma}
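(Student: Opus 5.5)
We work directly from the Hausdorff--Bernstein--Widder representation \eqref{eq:HBW}. Since $f$ is completely monotone with $f(0)<\infty$, there is a finite positive measure $\nu$ on $[0,\infty)$ with $f(v)=\int_{[0,\infty)}e^{-v\lambda}\,\dd\nu(\lambda)$. Substituting $v=ut^2$ and applying Tonelli's theorem gives
\[
 (\mathcal Lf)(u)=\int_{[0,\infty)}\frac u2\int_0^1 e^{-u\lambda t^2}\,\dd t\,\dd\nu(\lambda)
 =\int_{[0,\infty)}b_\lambda(u)\,\dd\nu(\lambda),
\]
where $b_\lambda(u):=\frac u2\int_0^1 e^{-u\lambda t^2}\,\dd t$. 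The plan is to show that each $b_\lambda$ is a Bernstein function and that Bernstein functions are preserved under positive mixtures.

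First we show that $b_\lambda$ is Bernstein. For $\lambda=0$ we have $b_0(u)=u/2$, whose derivative $1/2$ is completely monotone. For $\lambda>0$, the substitution $r=ut^2$ rewrites the integral as
\[
 b_\lambda(u)=\frac14\int_0^u r^{-1/2}u^{1/2}\,\frac{1}{\sqrt u}\,\cdots
\]
More cleanly, set $w=t\sqrt u$, so that $b_\lambda(u)=\frac{\sqrt u}{2}\int_0^{\sqrt u}e^{-\lambda w^2}\,\dd w$. A better route is to write $b_\lambda(u)=\frac12\int_0^1 u\,e^{-u\lambda t^2}\,\dd t$ and observe that for each fixed $t$ the function $u\mapsto u e^{-\lambda t^2 u}$ is \emph{not} Bernstein. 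So we avoid that decomposition and instead use the classical fact that $u\mapsto\int_0^u k(r)\,\dd r$ is Bernstein whenever $k$ is completely monotone. Differentiating gives
\[
 b_\lambda'(u)=\frac12\int_0^1(1-u\lambda t^2)e^{-u\lambda t^2}\,\dd t .
\]
Integrating by parts in $t$, using $\frac{\dd}{\dd t}\bigl(te^{-u\lambda t^2}\bigr)=(1-2u\lambda t^2)e^{-u\lambda t^2}$, we obtain
\[
 b_\lambda'(u)=\frac12 e^{-u\lambda}+\frac12\int_0^1 u\lambda t^2e^{-u\lambda t^2}\,\dd t .
\]
The second term still carries a factor of $u$, so we integrate by parts the other way. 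Writing $b_\lambda'(u)=\frac12\int_0^1 \frac{\dd}{\dd u}\bigl(ue^{-u\lambda t^2}\bigr)\,\dd t$ and substituting $t=s/\sqrt{u}$ turns $b_\lambda(u)$ into $\frac{\sqrt u}{2}\int_0^{\sqrt u}e^{-\lambda s^2}\,\dd s$. Then
\[
 b_\lambda'(u)=\frac14\Bigl(u^{-1/2}\int_0^{\sqrt u}e^{-\lambda s^2}\,\dd s+e^{-\lambda u}\Bigr).
\]
The term $e^{-\lambda u}$ is completely monotone. For the other term, the substitution $s=\sqrt u\,t$ gives
\[
 u^{-1/2}\int_0^{\sqrt u}e^{-\lambda s^2}\,\dd s=\int_0^1e^{-\lambda u t^2}\,\dd t,
\]
which is a positive mixture of exponentials $e^{-(\lambda t^2)u}$ and hence completely monotone. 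Therefore $b_\lambda'$ is completely monotone, and since $b_\lambda\ge0$, the function $b_\lambda$ is Bernstein. Passing to the $\nu$-mixture, we differentiate under the integral sign and use $|b_\lambda^{(k)}|\le C_k(u)$ uniformly in $\lambda$, with bounds coming from $t^{2j}\lambda^j u^j e^{-u\lambda t^2}$ being bounded, together with finiteness of $\nu$. This shows that $(\mathcal Lf)'=\int b_\lambda'\,\dd\nu$ is completely monotone, using fact (i) or the Laplace representation directly. So $\mathcal Lf$ is a Bernstein function.

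For the second assertion, $F(x)=1/(1+x)=\int_0^\infty e^{-x\lambda}e^{-\lambda}\,\dd\lambda$ is completely monotone on $[0,\infty)$. Since $\mathcal Lf\ge0$ is Bernstein, fact (ii) shows that $\mathcal Sf=F\circ\mathcal Lf$ is completely monotone. The main obstacle is the computation of $b_\lambda'$ and the justification of differentiation under the $\nu$-integral near $u=0$, where $u^{-1/2}$ factors appear. Using the final form $b_\lambda'(u)=\frac14\bigl(\int_0^1e^{-\lambda ut^2}\,\dd t+e^{-\lambda u}\bigr)$, which is bounded by $1/2$, together with the finiteness of $\nu$, resolves this.
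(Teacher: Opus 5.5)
Your proposal is correct and follows essentially the same route as the paper. You expand $f$ via Hausdorff--Bernstein--Widder, show that each kernel $\frac u2\int_0^1 e^{-\lambda u t^2}\,\dd t$ has derivative $\frac14\int_0^1 e^{-\lambda u t^2}\,\dd t+\frac14 e^{-\lambda u}$ (your substitution $s=t\sqrt u$ reaches the same formula the paper obtains by integrating by parts), mix over $\nu$, and compose with $x\mapsto(1+x)^{-1}$. In a final write-up, delete the abandoned false starts: the incomplete display ending in $\cdots$ and the detour through $\frac12 e^{-u\lambda}+\frac12\int_0^1 u\lambda t^2e^{-u\lambda t^2}\,\dd t$.
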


\begin{proof}
By \eqref{eq:HBW}, there is a finite positive measure $\mu$ such that
\[
 f(u)=\int_{[0,\infty)}e^{-ru}\,\dd\mu(r).
\]
Put
\[
 J_r(u):=\frac u2\int_0^1e^{-rut^2}\,\dd t.
\]
For $r\geq0$, differentiation under the integral sign together with integration by parts gives
\begin{equation}\label{eq:J-derivative}
 J_r'(u)
 =\frac12\int_0^1(1-rut^2)e^{-rut^2}\,\dd t
 =\frac14\int_0^1e^{-rut^2}\,\dd t+\frac14e^{-ru}.
\end{equation}
Therefore, for every integer $k\geq0$,
\[
 (-1)^k\frac{\mathrm d^k}{\mathrm d u^k}J_r'(u)
 =\frac14\int_0^1(rt^2)^k e^{-rut^2}\,\dd t
  +\frac14r^k e^{-ru}
 \geq0.
\]
Thus $J_r'$ is completely monotone for every $r\geq0$. Next Tonelli's theorem gives
\[
 \mathcal Lf(u)=\int_{[0,\infty)}J_r(u)\,\dd\mu(r),
\]
and \eqref{eq:J-derivative} shows that $(\mathcal Lf)'$ is completely
monotone.  Hence $\mathcal Lf$ is a Bernstein function.  Since
$x\mapsto(1+x)^{-1}$ is completely monotone, the composition theorem
for Bernstein functions, see  (ii) in Subsection \ref{compmonotone},  proves that $\mathcal Sf$ is completely
monotone. 
\end{proof}

\begin{lemma}\label{lem:P-completely-monotone}
Define
\begin{equation}\label{eq:P-definition}
 P(u):=\frac1{1+\sqrt u\,G^{-1}(\sqrt u)},
 \qquad u\geq0.
\end{equation}
Then $P$ is completely monotone.
\end{lemma}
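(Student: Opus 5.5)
The plan is to realize $P$ as a fixed point of the operator $\mathcal S$ of Lemma~\ref{lem:L-preserves-Bernstein}. We then obtain it as a limit of iterates that are completely monotone by that lemma, and conclude with the closure of complete monotonicity under pointwise limits, fact (i) of Subsection~\ref{compmonotone}.

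\emph{Step 1: the fixed-point identity.} Put $y=\sqrt u$ and $a=G^{-1}(y)$. Substituting $r=\sqrt u\,t$ in \eqref{eq:L-operator} gives
\[
 (\mathcal Lf)(u)=\frac{y}{2}\int_0^{y}f(r^2)\,\dd r .
\]
So $\mathcal SP=P$ amounts to $yG^{-1}(y)=\frac y2\int_0^y P(r^2)\,\dd r$. Since $G^{-1}(0)=0$, it is enough to check the derivative identity $(G^{-1})'(y)=\tfrac12P(y^2)$.

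From $G(a)=\sqrt\pi e^{a^2}\erf(a)$ we get $G'(a)=2+2aG(a)$, because $\frac{\dd}{\dd a}\bigl(\sqrt\pi\erf(a)\bigr)=2e^{-a^2}$. Hence
\[
 (G^{-1})'(y)=\frac{1}{2(1+aG(a))}=\frac{1}{2\bigl(1+yG^{-1}(y)\bigr)}=\tfrac12 P(y^2),
\]
and therefore $P=\mathcal SP$ on $[0,\infty)$. Note also that $0<P\le 1$ and $P$ is continuous with $P(0)=1$.

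\emph{Step 2: iteration.} Define $P_0\equiv1$, which is completely monotone, and $P_{k+1}=\mathcal SP_k$. By Lemma~\ref{lem:L-preserves-Bernstein} and induction, every $P_k$ is completely monotone with values in $(0,1]$ and $P_k(0)=1$. To prove $P_k\to P$ pointwise we use a Volterra-type contraction estimate. For $f,g$ with values in $[0,1]$, since $x\mapsto(1+x)^{-1}$ is $1$-Lipschitz on $[0,\infty)$,
\[
 \abs{\mathcal Sf(u)-\mathcal Sg(u)}\le\abs{\mathcal Lf(u)-\mathcal Lg(u)}\le\frac{\sqrt u}{2}\int_0^{\sqrt u}\abs{f-g}(r^2)\,\dd r .
\]
Writing $d_k(y):=\abs{P_k-P}(y^2)$, this reads $d_{k+1}(y)\le \frac y2\int_0^y d_k(r)\,\dd r$ with $d_0\le1$. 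On $[0,Y]$ this gives $d_{k+1}(y)\le\frac Y2\int_0^y d_k$, and induction yields
\[
 d_k(y)\le\frac{(Y/2)^k y^k}{k!}\le \frac{(Y^2/2)^k}{k!}\longrightarrow0 ,
\]
uniformly on $[0,Y]$. Hence $P_k\to P$ locally uniformly on $[0,\infty)$.

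As a consistency check, $\mathcal S$ is order-reversing, since $f\le g$ implies $\mathcal Lf\le\mathcal Lg$. Thus the even iterates decrease and the odd iterates increase, and they sandwich $P$. The contraction estimate alone already suffices, however.

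\emph{Step 3: conclusion and the main obstacle.} The $P_k$ are completely monotone and converge pointwise to the finite limit $P$, so $P$ is completely monotone by fact (i). We take (i) to include the standard consequence of Bernstein's theorem, via Helly selection on the representing measures of total mass $P_k(0)=1$, that a pointwise limit of completely monotone functions is again smooth and completely monotone on $(0,\infty)$; continuity at $0$ follows from $P(0)=1$.

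The only genuinely nonroutine point is Step 1: recognizing that the operator $\mathcal L$ was designed so that $P$ is self-reproducing under $\mathcal S$. This rests on the ODE $G'=2+2aG$. Once that identity is in hand, the convergence in Step 2 is a standard Volterra estimate, and I expect no difficulty there beyond bookkeeping.
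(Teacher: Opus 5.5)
Your proposal is correct and follows essentially the same route as the paper: the fixed-point identity $P=\mathcal SP$ derived from $G'(a)=2(1+aG(a))$, the iteration $P_0=1$, $P_{k+1}=\mathcal SP_k$ with complete monotonicity supplied by Lemma~\ref{lem:L-preserves-Bernstein}, and the Lipschitz estimate $\abs{P_{k+1}-P}\le\mathcal L\abs{P_k-P}$. The only difference is cosmetic: you bound the iterated error on $[0,Y]$ by the Gronwall-type quantity $(Y^2/2)^k/k!$, whereas the paper computes $\mathcal L^k1=\frac{k!}{(2k)!}u^k$ exactly, and both give the locally uniform convergence needed to invoke fact~(i).
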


\begin{proof}
Recall that $G(a)=\sqrt\pi\,e^{a^2}\erf(a)$ and
$
 \erf'(a)=\frac{2}{\sqrt\pi}e^{-a^2},
$ which gives
\begin{equation}\label{difeqq}
G'(a)
 =\sqrt\pi\left(
     2ae^{a^2}\erf(a)+e^{a^2}\erf'(a)
   \right)
 =2a\sqrt\pi\,e^{a^2}\erf(a)+2
 =2\bigl(1+aG(a)\bigr)>0.
\end{equation}
Thus $G$ maps $[0,\infty)$ bijectively onto $[0,\infty)$ and
\begin{equation}\label{eq:G-inverse-ODE}
 (G^{-1})'(y)=\frac1{2(1+yG^{-1}(y))}.
\end{equation}
By \eqref{eq:P-definition} and $G^{-1}(0)=0$,
\[
 G^{-1}(y)=\frac12\int_0^yP(v^2)\,\dd v.
\]
Substitution $v=yt$, $u=y^2$, yields the exact fixed-point identity
\begin{equation}\label{eq:P-fixed-point}
 P(u)=\frac{1}{1+\frac u2\int_0^1P(ut^2)\,\dd t}=\frac{1}{1+\mathcal L P(u)}
 =\mathcal SP(u),
\end{equation}
where $\mathcal S$ is an operator defined in \eqref{eq:S-operator}.
Let $P_0=1$ and set $P_{k+1}=\mathcal SP_k$.  By
Lemma~\ref{lem:L-preserves-Bernstein}, every $P_k$ is completely
monotone.  Since $0<P\leq1$ and
\[
 \abs{\frac1{1+a}-\frac1{1+b}}\leq\abs{a-b}
 \qquad \mbox{ for }a,b\geq0,
\]
the fixed-point identity \eqref{eq:P-fixed-point} implies
\[
 \abs{P_{k+1}-P}\leq\mathcal L\abs{P_k-P}.
\]
Inductively,
\begin{equation}\label{eq:P-iteration-error}
 \abs{P_k(u)-P(u)}\leq {\mathcal L}^k \abs{P_0-P} (u)\leq(\mathcal L^k1)(u),
\end{equation}
where we used $P_0=1$ and $0\le P  \le 1$. Next, for every integer $j\geq0$,
\[
 \mathcal L(x^j)(u)=\frac{u^{j+1}}{2(2j+1)},
\]
and therefore
\begin{equation}\label{eq:Lk-one}
 (\mathcal L^k1)(u)=\frac{k!}{(2k)!}u^k.
\end{equation}
The last expression tends to zero uniformly on compact intervals.
  Consequently, $P_k\to P$ locally uniformly and hence pointwise.  Since
each $P_k$ is completely monotone, property~(i) in
Subsection~\ref{compmonotone} implies that $P$ is completely monotone. 
\end{proof}

\begin{proof}[Proof of Theorem~\ref{thm:complete-monotonicity}]
We begin by computing $Q(u)=g''(\sqrt{u})$.  Since
\[
 g(y)=e^{(G^{-1}(y))^2},
\]
equation \eqref{eq:G-inverse-ODE} gives
\[
\begin{aligned}
 g'(y)
 &=2(G^{-1})'(y)G^{-1}(y)e^{(G^{-1}(y))^2} \\
 &=\frac{G^{-1}(y)e^{(G^{-1}(y))^2}}
         {1+yG^{-1}(y)}.
\end{aligned}
\]
Differentiating once more and again using
\eqref{eq:G-inverse-ODE}, we obtain
\begin{align*}
 g''(y)
 &=
 \frac{
 e^{(G^{-1}(y))^2}
 \bigl(1+2(G^{-1}(y))^2\bigr)(G^{-1})'(y)}
 {1+yG^{-1}(y)}-
 \frac{
 G^{-1}(y)e^{(G^{-1}(y))^2}
 \bigl(G^{-1}(y)+y(G^{-1})'(y)\bigr)}
 {\bigl(1+yG^{-1}(y)\bigr)^2}
 \\
 &=
 \frac{e^{(G^{-1}(y))^2}}
      {\bigl(1+yG^{-1}(y)\bigr)^2}
 \left[
 (G^{-1})'(y)
 \Bigl(
  1+2(G^{-1}(y))^2+2y(G^{-1}(y))^3
 \Bigr)
 -(G^{-1}(y))^2
 \right]
 \\
 &=
 \frac{e^{(G^{-1}(y))^2}}
      {\bigl(1+yG^{-1}(y)\bigr)^2}
 \left[
 \frac{
  1+2(G^{-1}(y))^2+2y(G^{-1}(y))^3}
 {2\bigl(1+yG^{-1}(y)\bigr)}
 -(G^{-1}(y))^2
 \right]
 \\
 &=
 \frac{e^{(G^{-1}(y))^2}}
      {2\bigl(1+yG^{-1}(y)\bigr)^3}.
\end{align*}
By \eqref{eq:P-definition}, for $y\geq0$,
\[
 \frac1{1+yG^{-1}(y)}=P(y^2).
\]
Consequently,
\begin{equation}\label{eq:Q-P-formula}
 Q(u)=g''(\sqrt{u})
 =\frac12e^{(G^{-1}(\sqrt{u}))^2}P(u)^3.
\end{equation}

Although $P^3$ is completely monotone,
\eqref{eq:Q-P-formula} does not immediately imply that $Q$ is
completely monotone, because the exponential factor is increasing.
Since $P(0)=1$ and $G^{-1}(0)=0$, formula
\eqref{eq:Q-P-formula} gives $Q(0)=1/2$.  Moreover, $Q(u)>0$.
Define
\begin{equation}\label{eq:B}
 B(u):=-\log\frac{Q(u)}{Q(0)}.
\end{equation}
Then $B(0)=0$ and
\[
 B'(u)=-\frac{Q'(u)}{Q(u)}.
\]
It is therefore enough to prove that $B'$ is completely monotone.
Indeed, this would imply that $B'\geq0$ and hence that $B\geq0$, so
$B$ would be a Bernstein function.  Property~(iii) in
Subsection~\ref{compmonotone} would then give
\[
 Q(u)=Q(0)e^{-B(u)}=\frac12e^{-B(u)},
\]
which is completely monotone.  We now express $B'$ in terms of $P$.  By
\eqref{eq:G-inverse-ODE} and \eqref{eq:P-definition}, for $u>0$,
\begin{equation}\label{eq:P-inverse-identities}
 (G^{-1})'(\sqrt{u})=\frac12P(u),
 \qquad
 G^{-1}(\sqrt{u})
 =\frac{1-P(u)}{\sqrt{u}\,P(u)}.
\end{equation}
Differentiating \eqref{eq:P-definition} directly and using
\eqref{eq:P-inverse-identities}, we obtain
\begin{align}
 P'(u)
 &=-P(u)^2\left(
 \frac{G^{-1}(\sqrt{u})}{2\sqrt{u}}
 +\frac12(G^{-1})'(\sqrt{u})
 \right) \notag\\
 &=-P(u)^2\left(
 \frac{1-P(u)}{2uP(u)}+\frac14P(u)
 \right) \notag\\
 &=-\frac{P(u)(1-P(u))}{2u}-\frac14P(u)^3.
 \label{eq:P-derivative}
\end{align}
The same identities give
\begin{equation}\label{eq:G-inverse-square-derivative}
 \frac{\dd}{\dd u}\bigl(G^{-1}(\sqrt{u})\bigr)^2
 =
 \frac{G^{-1}(\sqrt{u})}{\sqrt{u}}\,
 (G^{-1})'(\sqrt{u})
 =\frac{1-P(u)}{2u}.
\end{equation}

We now differentiate \eqref{eq:Q-P-formula} directly.  Using
\eqref{eq:P-derivative} and
\eqref{eq:G-inverse-square-derivative}, we find
\begin{align*}
 Q'(u)
 &=
 \frac12e^{(G^{-1}(\sqrt{u}))^2}P(u)^3
 \frac{1-P(u)}{2u}
 +\frac32e^{(G^{-1}(\sqrt{u}))^2}P(u)^2P'(u)
 \\
 &=
 \frac12e^{(G^{-1}(\sqrt{u}))^2}P(u)^3
 \frac{1-P(u)}{2u}
 \\
 &\qquad
 +\frac32e^{(G^{-1}(\sqrt{u}))^2}P(u)^2
 \left(
 -\frac{P(u)(1-P(u))}{2u}-\frac14P(u)^3
 \right)
 \\
 &=
 -\frac12e^{(G^{-1}(\sqrt{u}))^2}P(u)^3
 \left(
 \frac{1-P(u)}u+\frac34P(u)^2
 \right)
 \\
 &=
 -Q(u)\left(
 \frac{1-P(u)}u+\frac34P(u)^2
 \right).
\end{align*}
Since $Q(u)>0$, it follows that
\begin{equation}\label{eq:Q-log-derivative}
 B'(u)=-\frac{Q'(u)}{Q(u)}
 =\frac{1-P(u)}u+\frac34P(u)^2.
\end{equation}

The product $P^2$ is completely monotone by property~(i) in
Subsection~\ref{compmonotone}.  To treat the other term, the
Hausdorff--Bernstein--Widder theorem gives a positive measure $\rho$
on $[0,\infty)$ such that
\[
 P(u)=\int_{[0,\infty)}e^{-ur}\,\dd\rho(r).
\]
Since
$
 \rho([0,\infty))=P(0)=1,
$
for $u>0$ we have
$$
 \frac{1-P(u)}u
 =
 \int_{[0,\infty)}\frac{1-e^{-ur}}u\,\dd\rho(r) =
 \int_{[0,\infty)}\int_0^r e^{-us}\,\dd s\,\dd\rho(r)=
 \int_0^\infty e^{-us}\rho([s,\infty))\,\dd s,
$$
where the last equality follows from Tonelli's theorem.  Thus
$(1-P(u))/u$ is the Laplace transform of a positive measure and is
therefore completely monotone.

Thus $B'(u)$ is completely monotone.  Moreover, $B(0)=0$ and $B'(u)\geq0$, so
$B(u)\geq0$.  Hence $B$ is a Bernstein function.  Finally,
\eqref{eq:B} gives
\[
 Q(u)=Q(0)e^{-B(u)}=\frac12e^{-B(u)},
\]
which is completely monotone by property~(iii) in
Subsection~\ref{compmonotone}.  This completes the proof.
\end{proof}

\begin{corollary}\label{cor:polar-profile-CM}
The function
\begin{equation}\label{eq:polar-profile-CM}
 u\longmapsto(\varphi^\circ)''(\sqrt u)
\end{equation}
is completely monotone on $[0,\infty)$.
\end{corollary}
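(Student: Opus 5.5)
The plan is to reduce the corollary directly to Theorem~\ref{thm:complete-monotonicity} by a scaling argument based on \eqref{eq:polar-g-scaling}. For $s\geq0$ we have $\varphi^\circ(s)=g(\sqrt\pi s)$, and differentiating twice gives
\[
 (\varphi^\circ)''(s)=\pi\,g''(\sqrt\pi s),\qquad s>0.
\]
Both sides are continuous up to $s=0$: $\varphi^\circ$ is smooth by the discussion preceding Subsection~\ref{sec:complete-monotonicity}, and $g''$ is continuous on $[0,\infty)$ by the explicit formula for $g''$ obtained in the proof of Theorem~\ref{thm:complete-monotonicity}. Hence the identity extends to $s=0$.

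Next, substitute $s=\sqrt u$ with $u\geq0$. Since $\sqrt\pi\sqrt u=\sqrt{\pi u}$, this yields
\[
 (\varphi^\circ)''(\sqrt u)=\pi\,g''\bigl(\sqrt{\pi u}\bigr)=\pi\,Q(\pi u),
\]
where $Q$ is the function in \eqref{eq:Q-definition}. Theorem~\ref{thm:complete-monotonicity} states that $Q$ is completely monotone.

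The remaining step is to note that complete monotonicity is preserved under multiplication by a positive constant and under dilation of the argument. Indeed,
\[
 (-1)^k\frac{\dd^k}{\dd u^k}\bigl[\pi Q(\pi u)\bigr]=\pi^{k+1}(-1)^kQ^{(k)}(\pi u)\geq0 .
\]
Equivalently, if $Q(u)=\int e^{-u\lambda}\,\dd\nu(\lambda)$, then $\pi Q(\pi u)=\int e^{-u\lambda}\,\dd\tilde\nu(\lambda)$, where $\tilde\nu$ is $\pi$ times the pushforward of $\nu$ under $\lambda\mapsto\pi\lambda$. Finally, the value at $u=0$ is the finite limit $\pi Q(0)=\pi/2$, which agrees with the convention of Subsection~\ref{compmonotone}. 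So the function is completely monotone on $[0,\infty)$.

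None of these steps is a real obstacle, since the analytic content lies entirely in Theorem~\ref{thm:complete-monotonicity}. The only care needed is to apply the chain rule correctly for the factor $\sqrt\pi$, and to confirm the $s=0$ endpoint using continuity of $(\varphi^\circ)''$.
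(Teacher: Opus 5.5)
Your proposal is correct and follows essentially the same route as the paper. Both use the scaling $\varphi^\circ(s)=g(\sqrt\pi s)$ to write $(\varphi^\circ)''(\sqrt u)=\pi Q(\pi u)$, then invoke Theorem~\ref{thm:complete-monotonicity} together with the fact that positive multiples and positive dilations preserve complete monotonicity. Your extra care at the endpoint $u=0$ is consistent with the paper's convention $f(0):=f(0+)$.
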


\begin{proof}
By \eqref{eq:polar-g-scaling},
\[
 (\varphi^\circ)''(\sqrt u)
 =\pi g''(\sqrt{\pi u})=\pi Q(\pi u).
\]
Positive multiplication and positive dilation preserve complete
monotonicity.  The conclusion follows from
Theorem~\ref{thm:complete-monotonicity}.
\end{proof}

\begin{remark}\label{rem:MSE-CM}
The specific complete-monotonicity question resolved by Theorem~\ref{thm:complete-monotonicity} was posed on Mathematics Stack Exchange in March 2022 by the user \texttt{uniquesolution} \cite{MSE2022}, but no complete solution was given there.
 
\end{remark}

We now combine Corollary~\ref{cor:polar-profile-CM} with
the criterion of Subsection~\ref{subsec:zonoid-criterion}.

\begin{theorem}\label{thm:polar-is-zonoid}
For every $n\geq2$, the polar $Z_n^\circ$ is a zonoid.
More precisely, if
\begin{equation}\label{eq:polar-Bernstein-measure}
 (\varphi^\circ)''(\sqrt u)
 =\int_{(0,\infty)}e^{-\lambda u}\,\dd\nu(\lambda),
\end{equation}
then $h_{Z_n^\circ}$ has the  Gaussian-mixture representation
\begin{equation}\label{eq:polar-explicit-mixture}
 h_{Z_n^\circ}(x,t)
 =\int_{(0,\infty)}\int_{\R^{n-1}}
 \abs{\ip{x}{z}+t}\,
 \dd\gamma_{\lambda,n-1}(z)
 \frac{\sqrt\pi}{2\sqrt\lambda}\,\dd\nu(\lambda).
\end{equation}
Here $\gamma_{\lambda,n-1}$ denotes the centered Gaussian
probability measure on $\R^{n-1}$ whose density at $z$ is
$({\lambda}/{\pi})^{(n-1)/2}e^{-\lambda\abs{z}^2}$.
\end{theorem}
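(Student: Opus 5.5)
The plan is to apply Lemma~\ref{lem:Gaussian-mixture} with $\psi=\varphi^\circ$. All of its hypotheses have already been checked in the discussion following that lemma, except one, and Corollary~\ref{cor:polar-profile-CM} supplies that one.

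First I collect the hypotheses.
\begin{itemize}
\item $\varphi^\circ$ is even and convex, since by \eqref{eq:polar-profile-definition} it is a supremum of affine functions of $s$, and evenness follows from the evenness of $\varphi$.
\item $\varphi^\circ$ is $C^\infty$: the map $a\mapsto e^{a^2}\erf(a)$ is an odd smooth diffeomorphism of $\R$, and \eqref{eq:polar-parametric} expresses $\varphi^\circ$ through it.
\item The function $u\mapsto(\varphi^\circ)''(\sqrt u)$ is completely monotone by Corollary~\ref{cor:polar-profile-CM}.
\item The intercept limit is $c=0$, by \eqref{eq:polar-intercept}.
\end{itemize}
The lemma therefore applies. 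It also yields $\ell=\lim_{s\to\infty}(\varphi^\circ)'(s)$, and \eqref{sloplimit} gives $\ell=1$.

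Next I specialize the representation in the lemma. Applying the Hausdorff--Bernstein--Widder theorem to $(\varphi^\circ)''(\sqrt u)$ gives a measure $\nu$ on $[0,\infty)$. The proof of the lemma shows that $\nu(\{0\})=0$; this follows from finiteness of $\varphi^\circ(0)-c=\int_0^\infty r(\varphi^\circ)''(r)\,\dd r$. So $\nu$ lives on $(0,\infty)$, which is consistent with \eqref{eq:polar-Bernstein-measure}. With $\dd\eta=\frac{\sqrt\pi}{2\sqrt\lambda}\dd\nu$ and $c=0$, formula \eqref{eq:Hn-zonoid-representation} becomes exactly the right-hand side of \eqref{eq:polar-explicit-mixture}. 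I also note that the measure in \eqref{eq:polar-Bernstein-measure} is uniquely determined by the uniqueness of Laplace transforms, so it coincides with the one produced in the lemma.

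It remains to identify $H_n$ with $h_{Z_n^\circ}$.
\begin{itemize}
\item For $x\ne0$ the two agree by \eqref{eq:polar-support-profile}.
\item At $x=0$ the lemma gives $H_n(0,t)=\ell|t|=|t|$. Also $h_{Z_n^\circ}(0,t)=|t|\,h_{Z_n^\circ}(0,1)$, and $h_{Z_n^\circ}(0,1)=\|(0,1)\|_{Z_n}=1$ because $h_{Z_n}(0,1)=1$ means $(0,1)\in\partial Z_n$.
\end{itemize}
Alternatively, the case $x=0$ follows from continuity of both functions. Thus $h_{Z_n^\circ}=H_n$ is the support function of a zonoid, and since $\ell=1>0$ this zonoid has nonempty interior. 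Hence $Z_n^\circ$ is a zonoid.

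No step is a genuine obstacle, because the analytic work is already contained in Theorem~\ref{thm:complete-monotonicity}. The only point that needs care is confirming that the measure in \eqref{eq:polar-Bernstein-measure} has no atom at $0$ and that the normalization $\dd\eta=\frac{\sqrt\pi}{2\sqrt\lambda}\dd\nu$ matches. Both are handled by the computations \eqref{eq:nu-minus-one-moment}–\eqref{eq:eta-definition} in the lemma.
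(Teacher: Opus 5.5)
Your proposal is correct and essentially identical to the paper's proof: you apply Lemma~\ref{lem:Gaussian-mixture} to $\psi=\varphi^\circ$ using the hypotheses verified after that lemma, take complete monotonicity from Corollary~\ref{cor:polar-profile-CM}, exclude an atom of $\nu$ at $0$ via \eqref{eq:nu-minus-one-moment}, and set $\dd\eta=\frac{\sqrt\pi}{2\sqrt\lambda}\dd\nu$ with $c=0$. One small caveat concerns your direct argument at $x=0$: $h_{Z_n}(0,1)=1$ alone does not place $(0,1)$ on $\partial Z_n$, since you also need the $O(n-1)$-symmetry and convexity to put $(0,1)$ in $Z_n$; your alternative continuity argument, which is the paper's, already settles that case.
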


\begin{proof}
Corollary~\ref{cor:polar-profile-CM} provides
complete monotonicity, and the Hausdorff--Bernstein--Widder theorem
gives a finite representing measure $\nu$ on $[0,\infty)$.  The
intercept argument leading to \eqref{eq:nu-minus-one-moment} shows that
$\nu(\{0\})=0$, giving \eqref{eq:polar-Bernstein-measure}.  All other
hypotheses of Lemma~\ref{lem:Gaussian-mixture} were verified at the end
of Subsection~\ref{subsec:zonoid-criterion}.  Applying the lemma and
using \eqref{eq:eta-definition} gives
\eqref{eq:polar-explicit-mixture}.
\end{proof}

Combining the  zonoid representation
\eqref{eq:Zn-intro} with Theorem~\ref{thm:polar-is-zonoid} proves the
first assertion of Theorem~\ref{thm:main-finite}.  Notice that the same
Hausdorff--Bernstein--Widder representing measure works in every dimension.

\section{The exact Banach--Mazur distance}\label{sec:exact-BM}

Mathis proved sharp concentric ball inclusions for $Z_n$
\cite{Mathis2025}, which give an upper bound for its
Banach--Mazur distance from the Euclidean ball. We show that
no other comparison ellipsoid improves this bound, and hence
determine the exact Banach--Mazur distance.

\subsection{Reduction to a one-variable problem}
We begin by exploiting the orthogonal symmetries of $Z_n$. The following standard averaging lemma shows that any ellipsoidal sandwich may be replaced, without increasing its factor, by one whose ellipsoid is invariant under the same compact symmetry group. Applied to $Z_n$, this reduces the comparison ellipsoids to a one-parameter family.

\begin{lemma}
\label{lem:ellipsoid-averaging}
Let $K\subset\R^n$ be a convex body invariant under a compact
subgroup $\mathcal G\subset O(n)$.  If
\[
 E\subset K\subset aE
\]
for an origin-centered ellipsoid $E$, then there is a
$\mathcal G$-invariant origin-centered ellipsoid $\overline E$ such
that
\[
 \overline E\subset K\subset a\overline E.
\]
\end{lemma}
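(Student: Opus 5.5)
The plan is to average the ellipsoid over $\mathcal G$ in a way that turns both inclusions into inequalities for a positive definite matrix, so that convexity of the relevant functions lets each inclusion pass to the average.

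Write $E=A^{-1}B_2^n$ with $A$ symmetric positive definite, so that $\norm{y}_E=\abs{Ay}$. Put $M:=A^{-2}$. Then the support function of $E$ is
\[
 h_E(u)=\abs{A^{-1}u}=\ip{Mu}{u}^{1/2}.
\]
The two inclusions $E\subset K\subset aE$ are therefore equivalent to the pointwise inequalities
\[
 \ip{Mu}{u}^{1/2}\le h_K(u)\le a\,\ip{Mu}{u}^{1/2},
 \qquad u\in\R^n.
\]
The first step is to record the effect of the symmetry group. Because $K$ is $\mathcal G$-invariant, $h_K(gu)=h_K(u)$ for every $g\in\mathcal G$. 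Hence, for each $g$, the matrix $g^{T}Mg$ satisfies the same two inequalities: it corresponds to the ellipsoid $g^{-1}E$, which is again sandwiched as $g^{-1}E\subset K\subset a\,g^{-1}E$.

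The natural averaged object is
\[
 \overline M:=\int_{\mathcal G}g^{T}Mg\,\dd g,
\]
where $\dd g$ is normalized Haar measure. This matrix is symmetric and positive definite, since it is an average of positive definite matrices. It is also $\mathcal G$-invariant, by invariance of Haar measure. We then define $\overline E$ through its support function $h_{\overline E}(u)=\ip{\overline Mu}{u}^{1/2}$, that is, $\overline E=\overline M^{1/2}B_2^n$. This is a $\mathcal G$-invariant origin-centered ellipsoid.

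The main step is to verify the two inclusions for $\overline E$, and the key difficulty is that averaging quadratic forms does not commute with taking square roots. Both directions can still be handled.

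For the inner inclusion $\overline E\subset K$, we need $\ip{\overline Mu}{u}\le h_K(u)^2$. This follows by integrating the inequality $\ip{g^TMgu}{u}\le h_K(gu)^2=h_K(u)^2$ over $\mathcal G$. Here it is essential that we average the quadratic forms rather than the norms.

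For the outer inclusion $K\subset a\overline E$, we need $h_K(u)^2\le a^2\ip{\overline Mu}{u}$. This likewise follows by integrating $h_K(u)^2=h_K(gu)^2\le a^2\ip{Mgu}{gu}$ over $\mathcal G$.

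Both inequalities are linear in the quadratic form, so no convexity argument is actually needed beyond positivity of Haar measure. This completes the proof. The only point requiring care is compactness of $\mathcal G$, which guarantees that a normalized Haar measure exists.
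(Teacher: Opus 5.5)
Your proof is correct and is essentially the paper's argument. Both square the support functions so that $E\subset K\subset aE$ becomes $h_K^2/a^2\le h_E^2\le h_K^2$, average the quadratic form $h_E(gu)^2=\ip{g^{T}Mg\,u}{u}$ over normalized Haar measure, and use $h_K(gu)=h_K(u)$ to see that both inequalities, being linear in the form, survive the averaging; your matrix formulation only spells out the positive-definiteness and $\mathcal G$-invariance of $\overline M$, which the paper states in one line, and the opening mention of convexity is unnecessary, as you yourself note at the end.
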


\begin{proof}
Note that  $h_E(u)^2$ is a positive-definite quadratic
form.  The sandwich is equivalent to
\begin{equation}\label{eq:quadratic-sandwich}
 \frac{h_K(u)^2}{a^2}\leq h_E(u)^2\leq h_K(u)^2
 \qquad \mbox{ for } u\in\R^n.
\end{equation}
Let $\mu$ be Haar probability measure on $\mathcal G$ and set
\[
 \overline h^2(u)=\int_{\mathcal G}h_E^2(gu)\,\dd\mu(g).
\]
Because $h_K(gu)=h_K(u)$, averaging
\eqref{eq:quadratic-sandwich} preserves both inequalities.
The form $\overline h^2$ is positive definite and
$\mathcal G$-invariant, and hence $\overline h$ is the support function
of an invariant ellipsoid $\overline E$ with the same sandwich factor.
\end{proof}
For $0\leq s<\infty$, set
\[
 u_s:=\frac{(e_1,s)}{\sqrt{1+s^2}}
      \in \Sph^{n-1}\cap\operatorname{span}\{e_1,e_n\}.
\]
Thus $s$ parametrizes the Euclidean unit directions in a meridian of
$Z_n$, with $u_0=e_1$ and $u_s\to e_n$ as $s\to\infty$.  By the
one-homogeneity of the support function and the definition
$\varphi(s)=h_{Z_n}(e_1,s)$,
\[
 h_{Z_n}(u_s)
 =\frac{h_{Z_n}(e_1,s)}{\sqrt{1+s^2}}
 =\frac{\varphi(s)}{\sqrt{1+s^2}}.
\]
Let
\begin{equation}\label{eq:F1-definition}
 F_1(s):=\frac{\varphi(s)}{\sqrt{1+s^2}},
 \qquad 0\leq s<\infty,
\end{equation}
and $F_1(\infty)=1,$ by continuity. Consequently, $F_1$ is the restriction of $h_{Z_n}$ to the Euclidean
unit directions in a meridian, parametrized by their slope.  Since
$Z_n$ is a body of revolution and is origin-symmetric, these
directions represent, up to symmetry, all directions in $\Sph^{n-1}$.
Its endpoint values are 
\begin{equation}\label{eq:F1-endpoints}
 F_1(0)=F_1(\infty)=1.
\end{equation}
Using \eqref{eq:phi-identities},
\begin{equation}\label{eq:F1-derivative}
 F_1'(s)=
 \frac{\erf(s/\sqrt\pi)-se^{-s^2/\pi}}
 {(1+s^2)^{3/2}}.
\end{equation}

\begin{lemma}\label{lem:b-infty-minimum}
There is a unique $s_0>0$ such that
\begin{equation}\label{eq:s0-equation}
 \erf\!\left(\frac{s_0}{\sqrt\pi}\right)
 =s_0e^{-s_0^2/\pi}.
\end{equation}
The function $F_1$ decreases on $(0,s_0)$ and increases on
$(s_0,\infty)$.  Consequently,
\begin{equation}\label{eq:F1-range}
 b_\infty\leq F_1(s)\leq1,
 \qquad \mbox{ where }
 b_\infty=F_1(s_0)
 =e^{-s_0^2/\pi}\sqrt{1+s_0^2}.
\end{equation}
In particular,
\[
 0<b_\infty<1.
\]
\end{lemma}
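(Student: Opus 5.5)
The proof will study the sign of the numerator in \eqref{eq:F1-derivative}. Put
\[
 N(s):=\erf\!\left(\frac{s}{\sqrt\pi}\right)-se^{-s^2/\pi},\qquad s\geq0 .
\]
Since the denominator $(1+s^2)^{3/2}$ is positive, $F_1'$ and $N$ have the same sign. So the whole statement comes down to showing that $N$ has exactly one positive zero $s_0$, with $N<0$ on $(0,s_0)$ and $N>0$ on $(s_0,\infty)$.

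First I differentiate $N$:
\[
 N'(s)=\frac{2}{\pi}e^{-s^2/\pi}-e^{-s^2/\pi}+\frac{2s^2}{\pi}e^{-s^2/\pi}
 =e^{-s^2/\pi}\Bigl(\frac{2s^2}{\pi}+\frac2\pi-1\Bigr).
\]
Since $2/\pi-1<0$, the factor in parentheses is negative for $s<s_*:=\sqrt{\pi/2-1}$ and positive for $s>s_*$. Hence $N$ strictly decreases on $[0,s_*]$ and strictly increases on $[s_*,\infty)$.

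Next I use the boundary values. We have $N(0)=0$, so $N<0$ on $(0,s_*]$. As $s\to\infty$, $N(s)\to1$ because $se^{-s^2/\pi}\to0$. On $[s_*,\infty)$ the function $N$ is strictly increasing and goes from a negative value to the limit $1$, so by the intermediate value theorem it has exactly one zero $s_0>s_*$. This gives $N<0$ on $(0,s_0)$ and $N>0$ on $(s_0,\infty)$. The zero $s_0$ is exactly the unique positive solution of \eqref{eq:s0-equation}, and the monotonicity claims for $F_1$ follow.

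For the range, $F_1$ is continuous on $[0,\infty]$, with $F_1(0)=F_1(\infty)=1$ by \eqref{eq:F1-endpoints}. By the monotonicity just shown, its minimum is $F_1(s_0)$ and $F_1\leq 1$ everywhere. To evaluate the minimum, I use \eqref{eq:phi-definition} together with \eqref{eq:s0-equation}:
\[
 \varphi(s_0)=e^{-s_0^2/\pi}+s_0\cdot s_0e^{-s_0^2/\pi}=(1+s_0^2)e^{-s_0^2/\pi}.
\]
Dividing by $\sqrt{1+s_0^2}$ gives $b_\infty=e^{-s_0^2/\pi}\sqrt{1+s_0^2}$. This value is positive. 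It is strictly less than $1$ because $F_1$ strictly decreases on $(0,s_0)$ starting from $F_1(0)=1$.

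I do not expect any real obstacle here. The one step that needs care is the factorization of $N'$: it shows that $N$ has only a single sign change in its derivative, and that is what rules out additional zeros of $N$.
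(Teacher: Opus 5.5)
Your proposal is correct and follows essentially the same route as the paper: the same auxiliary function (the paper calls it $H$), the same factorization of its derivative with sign change at $\sqrt{(\pi-2)/2}$, the same use of $N(0)=0$ and $N(\infty)=1$ to get a unique zero, and the same evaluation $\varphi(s_0)=(1+s_0^2)e^{-s_0^2/\pi}$. The bounds $b_\infty\leq F_1\leq1$ and $0<b_\infty<1$ then follow exactly as in the paper.
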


\begin{proof}
Set
\[
 H(s):=\erf\!\left(\frac{s}{\sqrt\pi}\right)
       -se^{-s^2/\pi},
 \qquad s\geq0.
\]
Differentiating gives
\[
 H'(s)
 =\frac2\pi e^{-s^2/\pi}
  -\left(1-\frac{2s^2}{\pi}\right)e^{-s^2/\pi}
 =\frac{2s^2+2-\pi}{\pi}e^{-s^2/\pi}.
\]
Thus $H$ strictly decreases on
$(0,\sqrt{(\pi-2)/2})$ and strictly increases on
$(\sqrt{(\pi-2)/2},\infty)$.
Since $H(0)=0$, its minimum is strictly negative.
Moreover,
$
 \lim_{s\to\infty}H(s)=1,
$
because $\erf(s/\sqrt\pi)\to1$ and
$se^{-s^2/\pi}\to0$.
It follows that $H$ has exactly one positive zero $s_0$,
with
\[
 H(s)<0 \mbox{ when } 0<s<s_0,
 \qquad \mbox{ and }
 H(s)>0 \mbox{ when } s>s_0.
\]
The equation $H(s_0)=0$ is precisely
\eqref{eq:s0-equation}.

By \eqref{eq:F1-derivative},
\[
 F_1'(s)=\frac{H(s)}{(1+s^2)^{3/2}}.
\]
Consequently, $F_1$ strictly decreases on $(0,s_0)$ and
strictly increases on $(s_0,\infty)$. Together with
$F_1(0)=1$ and $\lim_{s\to\infty}F_1(s)=1$, this gives
\[
 b_\infty=F_1(s_0)\leq F_1(s)\leq1,
 \qquad s\geq0.
\]

Finally, \eqref{eq:s0-equation} and
\eqref{eq:phi-definition} give
\[
 \varphi(s_0)
 =e^{-s_0^2/\pi}
  +s_0\erf\!\left(\frac{s_0}{\sqrt\pi}\right)
 =(1+s_0^2)e^{-s_0^2/\pi}.
\]
Hence
\[
 b_\infty
 =\frac{\varphi(s_0)}{\sqrt{1+s_0^2}}
 =e^{-s_0^2/\pi}\sqrt{1+s_0^2}>0.
\]
Since $F_1$ strictly decreases from $F_1(0)=1$ to
$F_1(s_0)=b_\infty$, we also have $b_\infty<1$.
\end{proof}

\subsection{The optimal Banach--Mazur ellipsoid}

We now combine the preceding symmetry reduction with the analysis of
the meridional support ratio to optimize over all comparison ellipsoids.

\begin{theorem}\label{thm:exact-BM}
For every $n\geq2$,
\begin{equation}\label{eq:exact-BM}
 d_{\BM}(Z_n,B_2^n)=b_\infty^{-1}.
\end{equation}
The Euclidean ball is, up to scaling, an optimal Banach--Mazur ellipsoid.
\end{theorem}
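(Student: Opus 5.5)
Upper bound first. By Lemma~\ref{lem:b-infty-minimum}, $b_\infty\le F_1(s)\le 1$ for all $s$. Since $Z_n$ is a body of revolution and origin-symmetric, the values of $h_{Z_n}$ on all of $\Sph^{n-1}$ coincide with the values of $F_1$. Hence $b_\infty\le h_{Z_n}(u)\le 1$ for every unit $u$, which is exactly $b_\infty B_2^n\subset Z_n\subset B_2^n$. By \eqref{eq:BM-ellipsoids} this gives $d_{\BM}(Z_n,B_2^n)\le b_\infty^{-1}$.

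Lower bound. Suppose $E\subset Z_n\subset aE$. By Lemma~\ref{lem:ellipsoid-averaging} applied with $\mathcal G=O(n-1)\times\{\pm1\}$, we may assume $E$ is invariant under this group. Then
\[
 h_E(x,t)^2=\alpha\abs{x}^2+\beta t^2,\qquad \alpha,\beta>0.
\]
The sandwich becomes $h_{Z_n}^2/a^2\le h_E^2\le h_{Z_n}^2$, and we test it along the meridian directions $u_s$.
\begin{itemize}
\item At $u_0=e_1$ we have $h_{Z_n}=1$, so $\alpha\le1$.
\item At $u_\infty=e_n$ we have $h_{Z_n}=1$, so $\beta\le1$.
\item At $u_{s_0}$ we have $h_{Z_n}(u_{s_0})=b_\infty$ and $h_E(u_{s_0})^2=(\alpha+\beta s_0^2)/(1+s_0^2)$, so the lower inequality gives $\alpha+\beta s_0^2\ge (1+s_0^2)/a^2$ after checking whether $u_{s_0}$ is the binding point. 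Strictly speaking the lower sandwich inequality at $u_{s_0}$ reads $b_\infty^2/a^2\le h_E^2$, which is the wrong direction.
\end{itemize}
To get the useful inequalities, use instead the upper constraint $h_E\le h_{Z_n}$ at $u_{s_0}$ and the lower constraints $h_E\ge h_{Z_n}/a$ at $e_1$ and $e_n$. This gives
\[
 \alpha\ge 1/a^2,\qquad \beta\ge 1/a^2,\qquad \frac{\alpha+\beta s_0^2}{1+s_0^2}\le b_\infty^2 .
\]
Combining them yields $1/a^2\le(\alpha+\beta s_0^2)/(1+s_0^2)\le b_\infty^2$, hence $a\ge b_\infty^{-1}$.

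This is dimension-free and works already for $n\ge2$, since for $n=2$ the group $\{\pm1\}\times\{\pm1\}$ still forces the form to be diagonal. Together with the upper bound, \eqref{eq:exact-BM} follows, and the Euclidean ball $B_2^n$, scaled by $b_\infty$, attains the optimal factor.

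The main point to check is that the averaged ellipsoid's quadratic form is exactly diagonal of the stated type. Invariance under $O(n-1)$ together with the reflection $t\mapsto -t$ kills the cross terms between $x$ and $t$, and $O(n-1)$-invariance of a quadratic form on $\R^{n-1}$ forces it to be a multiple of $\abs{x}^2$. The reflection is a symmetry of $Z_n$ because $Z_n$ is origin-symmetric and of revolution: composing $-\mathrm{id}$ with the element $-I_{n-1}\in O(n-1)$ gives $(x,t)\mapsto(x,-t)$. Everything else reduces to the three-point evaluation, so I expect no serious obstacle beyond this bookkeeping.
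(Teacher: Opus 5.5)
Your proof is correct and follows the paper's route: you average to an $O(n-1)\times\{\pm1\}$-invariant ellipsoid, test the sandwich at $e_1$, $e_n$ and $u_{s_0}$, and get the matching upper bound from $b_\infty B_2^n\subset Z_n\subset B_2^n$. The one difference is that you observe $h_E(u_{s_0})^2$ is a convex combination of $h_E(e_1)^2$ and $h_E(e_n)^2$, which replaces the paper's explicit ratio $M_\alpha/m_\alpha$ and its case split $\alpha\ge1$ versus $\alpha\le1$ with a single inequality; in a write-up, delete the first bullet list, whose first two items go unused and whose third you retract yourself.
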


\begin{proof}
We first explain why this calculation accounts for every
linear position in the definition of the Banach--Mazur distance.  The
sandwich
\[
 B_2^n\subset TZ_n\subset aB_2^n,
 \qquad T\in GL(n),
\]
is equivalent, after applying $T^{-1}$, to
\[
 T^{-1}B_2^n\subset Z_n\subset aT^{-1}B_2^n.
\]
Conversely, every origin-centered ellipsoid is $T^{-1}B_2^n$ for some
$T\in GL(n)$.  Thus the ellipsoid formulation
\eqref{eq:BM-ellipsoids} loses no possible linear position.

The symmetry group of $Z_n$ contains
\[
 \mathcal G=O(n-1)\times\{\pm1\},
\]
acting by $(R,\varepsilon)(x,t)=(Rx,\varepsilon t)$.  If an arbitrary
ellipsoid $E$ gives a sandwich
$E\subset Z_n\subset aE$, Lemma~\ref{lem:ellipsoid-averaging}
produces a $\mathcal G$-invariant ellipsoid with the same factor $a$.
Consequently, no nonsymmetric ellipsoid can improve the infimum over
all $\mathcal G$-invariant ellipsoids.

Let $E$ be such an invariant ellipsoid.  The square of its support
function has the form
\[
 h_E(x,t)^2
 =\ip{Cx}{x}+2t\ip{v}{x}+Bt^2,
\]
where $C$ is a positive-definite symmetric $(n-1)\times(n-1)$ matrix,
$v\in\R^{n-1}$, and $B>0$.  Reflection in the last coordinate forces
$v=0$, and invariance under every $R\in O(n-1)$ forces
$C=AI_{n-1}$ for some $A>0$.  Hence
\[
 h_E(x,t)^2=A\abs{x}^2+Bt^2,
 \qquad A,B>0.
\]
Writing $\alpha=B/A$, we see that $E=\sqrt A\,E_\alpha$, where
\begin{equation}\label{eq:Ealpha}
 h_{E_\alpha}(x,t)^2=\abs{x}^2+\alpha t^2,
 \qquad \alpha>0.
\end{equation}
Equivalently,
\[
 E_\alpha
 =\left\{(y,z)\in\R^{n-1}\times\R:
          \abs{y}^2+\frac{z^2}{\alpha}\leq1\right\}.
\]
Thus $E_\alpha$ has horizontal semiaxes $1$ and axial semiaxis
$\sqrt\alpha$.  The parameter $\alpha$ determines its shape, whereas
$\sqrt A$ is only an overall homothety.

We next compute the optimal homothety for a fixed shape.  For
$x\neq0$, put
\[
 s=\frac{\abs{t}}{\abs{x}}.
\]
Using homogeneity and evenness, the quotient of support functions is
\begin{equation}\label{eq:Falpha}
 \frac{h_{Z_n}(x,t)}{h_{E_\alpha}(x,t)}
 =F_\alpha(s)
 :=\frac{\varphi(s)}{\sqrt{1+\alpha s^2}}.
\end{equation}
The axial direction $x=0$ corresponds to $s=\infty$.  Since
$\varphi(s)/s\to1$, $F_\alpha$ extends continuously to the
compactified interval $[0,\infty]$, with
\[
 F_\alpha(0)=1,
 \qquad
 F_\alpha(\infty)=\alpha^{-1/2}.
\]
Every nonzero direction is represented by one of these values of
$s$, because the quotient is homogeneous of degree zero and invariant
under $\mathcal G$.  Put
\[
 m_\alpha:=\min_{0\leq s\leq\infty}F_\alpha(s),
 \qquad
 M_\alpha:=\max_{0\leq s\leq\infty}F_\alpha(s).
\]
Then
\[
 m_\alpha h_{E_\alpha}(u)
 \leq h_{Z_n}(u)
 \leq M_\alpha h_{E_\alpha}(u)
 \qquad\mbox{ for all } u\in\R^n.
\]
Comparison of support functions is equivalent to inclusion, and
therefore
\[
 m_\alpha E_\alpha
 \subset Z_n
 \subset M_\alpha E_\alpha
 =\frac{M_\alpha}{m_\alpha}
   \bigl(m_\alpha E_\alpha\bigr).
\]
Conversely, suppose that a homothetic copy $\rho E_\alpha$ satisfies
\[
 \rho E_\alpha\subset Z_n\subset a\rho E_\alpha.
\]
The corresponding support inequalities say
\[
 \rho\leq F_\alpha(s)\leq a\rho, \qquad
\mbox{ for all } 0\leq s\leq\infty.
\]
Thus $\rho\leq m_\alpha$ and $M_\alpha\leq a\rho$, which imply
$a\geq M_\alpha/m_\alpha$.  The preceding sandwich attains this value
when $\rho=m_\alpha$.  Hence
\begin{equation}\label{eq:Delta-alpha}
 \begin{split}
 \Delta(E_\alpha,Z_n)
 &:=%
 \inf_{\rho>0}\inf\left\{
 a\geq1:\rho E_\alpha\subset Z_n\subset a\rho E_\alpha
 \right\}\\
 &=\frac{M_\alpha}{m_\alpha}
 =\frac{\displaystyle
        \max_{0\leq s\leq\infty}F_\alpha(s)}
       {\displaystyle
        \min_{0\leq s\leq\infty}F_\alpha(s)}.
 \end{split}
\end{equation}
This also explains explicitly why the scale $\rho$ need not be kept
as a second parameter: replacing $E_\alpha$ by $\rho E_\alpha$
divides both the maximum and minimum support ratios by $\rho$, so it
cancels from their quotient.  The parameter $\alpha$, on the other hand, determines the relative
scaling in the axial direction: the corresponding semiaxis is
$\sqrt{\alpha}$.  The symmetry
reduction now gives
\[
 d_{\BM}(Z_n,B_2^n)
 =\inf_{\alpha>0}\Delta(E_\alpha,Z_n).
\]

It remains to minimize this one-variable expression.  Recall that
$s_0$ is the unique minimizer of $F_1$ and
\[
 \varphi(s_0)=b_\infty\sqrt{1+s_0^2}.
\]
If $\alpha\geq1$, the horizontal direction gives
$M_\alpha\geq F_\alpha(0)=1$, whereas
\[
 m_\alpha
 \leq F_\alpha(s_0)
 =b_\infty
   \sqrt{\frac{1+s_0^2}{1+\alpha s_0^2}}
 \leq b_\infty.
\]
Thus $\Delta(E_\alpha,Z_n)\geq b_\infty^{-1}$.

If $0<\alpha\leq1$, the axial direction gives
$M_\alpha\geq F_\alpha(\infty)=\alpha^{-1/2}$, while
\[
 m_\alpha\leq b_\infty
 \sqrt{\frac{1+s_0^2}{1+\alpha s_0^2}}.
\]
It follows that
\begin{equation*}
 \Delta(E_\alpha,Z_n)
 \geq\frac{\alpha^{-1/2}}
{b_\infty\sqrt{(1+s_0^2)/(1+\alpha s_0^2)}}=\frac1{b_\infty}
 \sqrt{\frac{1+\alpha s_0^2}{\alpha(1+s_0^2)}}
 \geq\frac1{b_\infty},
\end{equation*}
where the last inequality is equivalent to $\alpha\leq1$.

Finally, for $\alpha=1$, Lemma~\ref{lem:b-infty-minimum} gives
$m_1=b_\infty$ and $M_1=1$. Since $E_1=B_2^n$ we get $\Delta(E_1,Z_n)=b_\infty^{-1}$. 
Together with the preceding infimum over $\alpha$, this proves
\eqref{eq:exact-BM} and the optimality of the Euclidean ball.  The
sharp representative of its homothety class is
$b_\infty B_2^n$, giving
$
 b_\infty B_2^n\subset Z_n\subset B_2^n.
$
\end{proof}


To indicate the size of the constant in
\eqref{eq:exact-BM}, we record the following numerical
approximations.  They are included only for orientation and play no
role in the proof.  Solving the one-variable equation
\eqref{eq:s0-equation} gives, to two decimal places,
$$
 s_0\approx1.43, \qquad
 b_\infty\approx0.91 \qquad\mbox{and} \qquad
 b_\infty^{-1}\approx1.10.
$$
More concretely, if $
 H(s)=\erf(s/\sqrt\pi)-se^{-s^2/\pi},
$  as in the proof of Lemma \ref{lem:b-infty-minimum},
then $H(1.42)<0<H(1.43)$, so $1.42<s_0<1.43$.  These signs can be
seen numerically from
\[
H(1.42)\approx-0.0046<0<0.00027\approx H(1.43).
\]
 We note that the strict inequality $b_\infty<1$ is proved in
Lemma~\ref{lem:b-infty-minimum} without using these numerical values.

The inequality $b_\infty^{-1}>1.09$ can be checked directly,
without relying on the  decimal approximations.  Put
$x=s^2/\pi$.  Combining the power series for the exponential and the
error function gives
\[
 \varphi(s)=1+\sum_{j=1}^\infty
 (-1)^{j-1}\frac{x^j}{j!(2j-1)}.
\]
For $0<x<1$ the terms decrease in absolute value.  Hence, with
$p(x)=1+x-x^2/6+x^3/30$, the alternating-series estimate, the
elementary bound $\pi>31/10$, and $\sqrt{74}>43/5$ give
\[
 F_1\!\left(\frac75\right)
=\frac5{\sqrt{74}}\,\varphi\!\left(\frac75\right)
 <\frac{25}{43}\,
 p\!\left(\frac{98}{155}\right)
 <\frac{100}{109}.
\]
The polynomial  $p$ is increasing on $[0,1]$.  Since
$b_\infty\leq F_1(7/5)$, it follows that
\[
 b_\infty^{-1}>\frac{109}{100}=1.09.
\]

\section{An infinite-dimensional consequence}
\label{sec:infinite-dimensional}

This section explains why the dimension-free nature of the profile is
substantive.  All Banach and Hilbert spaces below are real.

Let $H$ be a separable infinite-dimensional Hilbert space.  On a standard 
atomless probability space 
$(\Omega,\mathcal F,\mathbb P)$ choose a
\emph{scaled isonormal Gaussian process} $(W_x)_{x\in H}$ satisfying
\begin{equation}\label{eq:isonormal-covariance}
 \E W_xW_y=\frac\pi2\ip{x}{y}_H.
\end{equation}
For example, if $(e_j)$ is an orthonormal basis of $H$ and
$(\gamma_j)$ are independent $N(0,\pi/2)$ variables, set
\[
 W_x=\sum_{j=1}^\infty\ip{x}{e_j}\gamma_j,
\]
with convergence in $L_2(\Omega)$.   Thus
$(W_x)_{x\in H}$ is a scalar-valued Gaussian process indexed by $H$;
the series is interpreted separately for each $x\in H$, rather than as
the inner product with an $H$-valued Gaussian random vector.  The
scalar process satisfies
\begin{equation}\label{eq:W-absolute-moment}
 \E\abs{W_x}=\norm{x}_H.
\end{equation}
On $H\oplus\R$ define
\begin{equation}\label{eq:X-norm}
\norm{(x,t)}_X:=\E\abs{W_x+t}.
\end{equation}
The map
\begin{equation}\label{eq:J-embedding}
 J:H\oplus\R\longrightarrow L_1(\Omega),
 \qquad J(x,t)=W_x+t,
\end{equation}
is linear and isometric.  It is injective because a Gaussian random
variable is almost surely constant only when its variance is zero.
Furthermore,
\begin{equation}\label{eq:X-Hilbert-equivalence}
 \max\{\norm{x}_H,\abs t\}
 \leq\norm{(x,t)}_X
 \leq\norm{x}_H+\abs t.
\end{equation}
The upper estimate is the triangle inequality.  The two
lower estimates, by $\abs t$ and $\norm{x}_H$, follow, respectively,
from Jensen's inequality and the symmetry of $W_x$:
\begin{align*}
 \E\abs{W_x+t}
 &\geq \abs{\E(W_x+t)}=\abs t,\\
 \E\abs{W_x+t}
 &=\frac12\E\bigl(\abs{W_x+t}+\abs{W_x-t}\bigr)
 \geq\E\abs{W_x}=\norm{x}_H.
\end{align*}
In the second line, the equality uses the fact that $W_x$ and $-W_x$
have the same distribution, while the inequality follows from
$\abs{a+b}+\abs{a-b}\geq2\abs a$.

Consequently,
\[
 \frac1{\sqrt2}
 \bigl(\norm{x}_H^2+t^2\bigr)^{1/2}
 \leq \norm{(x,t)}_X
 \leq
 \sqrt2\bigl(\norm{x}_H^2+t^2\bigr)^{1/2}.
\]
Thus $\norm{\cdot}_X$ is equivalent to the Hilbert norm on
$H\oplus_2\R$.  Since $H\oplus_2\R$ is complete, equivalence of the
two norms implies that $X$ is complete.  Moreover, $X$ is isomorphic
to the Hilbert space $H\oplus_2\R$ and is therefore reflexive.

For $r=\norm{x}_H>0$, \eqref{eq:folded-normal} gives
\begin{equation}\label{eq:X-profile}
 \norm{(x,t)}_X=r\varphi(t/r).
\end{equation}
Identify $(H\oplus\R)^*$ with $H\oplus\R$ through the underlying
Hilbert pairing.  Maximizing first over the direction of the horizontal
coordinate and then using homogeneity gives, for $y\ne0$
\begin{equation}\label{eq:X-dual-profile}
 \norm{(y,s)}_{X^*}
 =\sup_{(x,t)\neq0}
 \frac{\ip{x}{y}+ts}{\norm{(x,t)}_X}
 =\norm{y}_H\sup_{q\in\R}
 \frac{1+(s/\norm{y}_H)q}{\varphi(q)}
=\norm{y}_H\,\varphi^\circ(s/\norm{y}_H).
\end{equation}
The axial case follows by continuity.  If
$E\subset H$ is an $(n-1)$-dimensional subspace and
$X_E=(E\oplus\R,\norm{\cdot}_X)$, then, after an orthogonal
identification of $E$ with $\R^{n-1}$, the finite-dimensional
identities are
\[
 B_{X_E}=Z_n^\circ,
 \qquad B_{X_E^*}=Z_n.
\]

We next construct an explicit $L_1$ representation of the dual norm.
Let $\nu$ be the Hausdorff--Bernstein--Widder representing measure in
\eqref{eq:polar-Bernstein-measure}, and define
\begin{equation}\label{eq:eta-infinite}
 \dd\eta(\lambda)
 =\frac{\sqrt\pi}{2\sqrt\lambda}\,\dd\nu(\lambda).
\end{equation}
Lemma~\ref{lem:Gaussian-mixture}, together with
\eqref{eq:polar-intercept}, gives
\begin{equation}\label{eq:dual-profile-mixture-infinite}
 \varphi^\circ(a)
 =\int_{(0,\infty)}
 \E\abs{\frac{\gamma}{\sqrt{\pi\lambda}}+a}
 \,\dd\eta(\lambda),
\end{equation}
where $\gamma\sim N(0,\pi/2)$.  The normalization identities are
\begin{equation}\label{eq:eta-normalizations}
 \eta((0,\infty))=1,
 \qquad
 \int_{(0,\infty)}\frac1{\sqrt{\pi\lambda}}
 \,\dd\eta(\lambda)=1.
\end{equation}
The first identity follows from the limiting slope of
$\varphi^\circ$. Indeed, each Gaussian profile
\[
 F_\lambda(a):=
 \E\abs{\frac{\gamma}{\sqrt{\pi\lambda}}+a}
\]
satisfies
\[
 F_\lambda'(a)
 =\E\operatorname{sgn}\left(
 a+\frac{\gamma}{\sqrt{\pi\lambda}}\right)
 \longrightarrow 1
 \qquad \mbox{as } a\to\infty,
\]
and $\abs{F_\lambda'(a)}\leq1$.  Since $\eta$ is finite,
dominated convergence in
\eqref{eq:dual-profile-mixture-infinite} gives
\[
 \lim_{a\to\infty}(\varphi^\circ)'(a)
 =\eta((0,\infty)).
\]
Thus \eqref{sloplimit}  implies
$\eta((0,\infty))=1$. The second identity in  \eqref{eq:eta-normalizations}  follows by evaluating
\eqref{eq:dual-profile-mixture-infinite} at zero.  


On the product probability space
$((0,\infty)\times\Omega,\eta\otimes\mathbb P)$ define
\begin{equation}\label{eq:J-tilde-embedding}
 \widetilde J(y,s)(\lambda,\omega)
 :=\frac{W_y(\omega)}{\sqrt{\pi\lambda}}+s.
\end{equation}
The two identities in \eqref{eq:eta-normalizations} give
integrability; indeed,
\[
 \int_{(0,\infty)}
 \E\abs{\frac{W_y}{\sqrt{\pi\lambda}}+s}\,\dd\eta(\lambda)
 \leq \norm{y}_H+\abs s.
\]
For $y\neq0$, \eqref{eq:X-dual-profile}--
\eqref{eq:dual-profile-mixture-infinite} show that
$$
 \norm{\widetilde J(y,s)}_{L_1(\eta\otimes\mathbb P)}
 =\int_{(0,\infty)}
 \E\abs{\frac{W_y}{\sqrt{\pi\lambda}}+s}
\,\dd\eta(\lambda)=\norm{y}_H\,\varphi^\circ(s/\norm{y}_H)
 =\norm{(y,s)}_{X^*}.
$$
For $y=0$, both the first and last expressions in this
calculation equal $\abs s$, because $\eta((0,\infty))=1$.  Thus
$\widetilde J$ is a linear isometric embedding of $X^*$ into an
$L_1$-space.  Since $\eta$ is
a Borel probability measure on $(0,\infty)$, the product probability
space $((0,\infty)\times\Omega,\eta\otimes\mathbb P)$ is also standard
and atomless.  By the classification theorem for standard 
probability spaces
\cite[Theorem~9.4.7, pp.~282--283]{Bogachev2007}, every atomless
standard probability space is measure-isomorphic modulo null sets to
$([0,1],\lambda)$.  Consequently, both $L_1(\Omega)$ and
$L_1((0,\infty)\times\Omega,\eta\otimes\mathbb P)$ are linearly
isometric to $L_1([0,1])$.

The norm in \eqref{eq:X-norm} is not Hilbertian.  If $e\in H$ is a
unit vector and the parallelogram identity held, its application to
$(e,0)$ and $(0,t)$ would imply
\begin{equation}\label{eq:parallelogram-phi}
 \varphi(t)^2=1+t^2,
 \qquad \mbox{ for } t\in\R.
\end{equation}
But \eqref{eq:phi-definition} gives
\begin{equation}\label{eq:phi-second-zero}
 \varphi''(0)=\frac2\pi\neq1,
\end{equation}
contradicting the second derivative of
\eqref{eq:parallelogram-phi} at zero.

Finally, because $\widetilde J:X^*\to L_1([0,1])$ is an
isometry, the Hahn--Banach theorem implies that its adjoint is a
metric quotient; see
\cite[Theorem~3.3 and Chapter~4]{Rudin1991}:
\begin{equation}\label{eq:L-infty-quotient}
 \widetilde J^{\,*}:L_\infty[0,1]\longrightarrow X^{**}.
\end{equation}
Since $X$ is reflexive by
\eqref{eq:X-Hilbert-equivalence}, $X^{**}=X$ canonically.  This proves
Theorem~\ref{thm:main-infinite-intro}.

\begin{remark}[Compatibility of the finite-dimensional family]
Choose an increasing sequence
$E_1\subset E_2\subset\cdots\subset H$ with $\dim E_j=j$ and dense
union.  For $n\geq2$, let $X_n=E_{n-1}\oplus\R$, with the restriction
of \eqref{eq:X-norm}, and identify $E_{n-1}$ orthogonally with
$\R^{n-1}$.  Then
 $B_{X_n}=Z_n^\circ$, 
 $B_{X_n^*}=Z_n$.
 Moreover, $X=\overline{\bigcup_{n\geq2}X_n}$, and the dual profile
\eqref{eq:X-dual-profile} gives the corresponding compatibility on the
dual side.  Thus the same dimension-independent Gaussian profile and positive
integral representation yield the infinite-dimensional example
directly.
\end{remark}

\section{Further remarks}\label{sec:rem}

\subsection{A remark on intersection bodies}

Recall that a star body $L\subset\R^n$ is determined by its positive
continuous radial function
\[
 \rho_L(\xi)=\max\{r\geq0:r\xi\in L\},
 \qquad \xi\in\Sph^{n-1}.
\]
Following Lutwak \cite{Lutwak1988}, the \emph{intersection body of
$L$}, denoted by $IL$, is the origin-symmetric star body whose radial
function is
\begin{equation}\label{eq:Lutwak-intersection-body}
 \rho_{IL}(\xi)
 =
 \operatorname{vol}_{n-1}(L\cap\xi^\perp)
 =
 \frac1{n-1}
 \int_{\Sph^{n-1}\cap\xi^\perp}
       \rho_L(\theta)^{n-1}\,\dd\theta .
\end{equation}
Gardner and Zhang independently considered the more general class
obtained by taking the closure of the bodies $IL$ in the radial
metric
\[
 d_r(K,M)
 =
 \max_{\xi\in\Sph^{n-1}}
 \abs{\rho_K(\xi)-\rho_M(\xi)};
\]
thus an origin-symmetric star body $K$ is called an
\emph{intersection body} if there are star bodies $L_j$ such that
$d_r(K,IL_j)\to0$ \cite{Gardner1994,Zhang1994Centered}.  We use the
term intersection body in this more general sense.

Koldobsky proved that the unit ball of every finite-dimensional
subspace of $L_1$ is an intersection body
\cite[Theorem~3]{Koldobsky1998}.  By the classical correspondence
recalled above,
\[
 Z\text{ is a zonoid}
 \quad\Longleftrightarrow\quad
 (\R^n,\norm{\cdot}_{Z^\circ})
 \text{ embeds linearly isometrically into }L_1.
\]
Consequently, the polar of every zonoid is an intersection body.
Since both $Z_n$ and $Z_n^\circ$ are zonoids, applying this implication
to each of them shows that both $Z_n$ and $Z_n^\circ$ are intersection
bodies.  Taking $K=Z_n$ and using
Theorem~\ref{thm:exact-BM}, we obtain, for every $n\geq2$,
\[
 K\text{ and }K^\circ\text{ are intersection bodies},
 \qquad
 d_{\BM}(K,B_2^n)=b_\infty^{-1}>1.
\]
For further background and equivalent analytic descriptions, see
\cite[Chapters~4--6, in particular p.~127]{Koldobsky2005}.

\subsection{A remark on the complex case}\label{complex}

After completing the real argument, we asked OpenAI's ChatGPT to
investigate the corresponding complex problem using ideas developed
in the present paper.  ChatGPT produced a complete solution.  We record this solution in the
companion arXiv note \cite{RyaboginZvavitchComplex}; no separate
journal submission is planned.  We take full responsibility for the
correctness of its statements.

The companion note shows that, for every $n\geq2$, there is a complex
zonoid $Z_n^{\mathbb C}\subset\mathbb C^n$ whose polar is also a
complex zonoid and such that
\[
 d_{\mathrm{BM}}^{\mathbb C}
 \bigl(Z_n^{\mathbb C},B_{2,\mathbb C}^n\bigr)
 =\frac{5}{2\sqrt6}>1.
\]
Here $B_{2,\mathbb C}^n$ is the Euclidean unit ball in $\mathbb C^n$, and
$d_{\mathrm{BM}}^{\mathbb C}$ denotes the Banach--Mazur distance defined
using invertible complex-linear maps. 
When regarded as bodies in $\mathbb R^{2n}$, complex zonoids are real
zonoids, and complex and real polarities agree.  For this particular
family, the proof also gives
\[
 d_{\mathrm{BM}}^{\mathbb R}
 \bigl(Z_n^{\mathbb C},B_2^{2n}\bigr)
 =\frac{5}{2\sqrt6}.
\]
Thus the complex construction also yields real counterexamples in
every even real dimension $2n \ge 4$ and already gives a negative answer to the real
convergence-to-one question.

The present paper nevertheless provides a direct construction in
every real dimension, together with a uniform $\liminf$ estimate and
an infinite-dimensional real example.

\section*{Acknowledgments}

The authors became interested in this question in 2001.  Over
the years, they discussed it with many colleagues and friends,
including Mar\'ia \'Angeles Alfonseca, Alexander Koldobsky, Joram
Lindenstrauss, Yossi Lonke, Fedor Nazarov, Mark Rudelson, Rolf
Schneider, and Vladyslav Yaskin.  They are grateful to all of them, as
well as to the many others with whom they discussed the problem, for
their interest and many valuable conversations.

The authors thank OpenAI's ChatGPT for  assistance in checking
details of the argument, improving the exposition, and preparing the
manuscript.  The authors retain full responsibility for all
mathematical claims and conclusions.

\vspace{2mm}
\noindent Dmitry Ryabogin\\
Department of Mathematical Sciences, Kent State University, Kent, OH 44242, USA.\\
E-mail address: \href{mailto:ryabogin@math.kent.edu}{ryabogin@math.kent.edu}.

\vspace{2mm}
\noindent Artem Zvavitch\\
Department of Mathematical Sciences, Kent State University, Kent, OH 44242, USA.\\
E-mail address: \href{mailto:zvavitch@math.kent.edu}{zvavitch@math.kent.edu}.

\end{document}